\pgfplotsset{compat=1.11}
\renewcommand{\thefootnote}{\fnsymbol{footnote}}
\newcommand{\definedas}{\mathrel{\raise.095ex\hbox{\rm :}\mkern-5.2mu=}}
\newcommand{\R}{\mathbb{R}}
\newcommand{\Sbb}{\mathbb{S}}
\newcommand{\SCal}{\mathcal{S}}
\renewcommand{\d}{\,\mathrm{d}}
\newcommand{\graph}{\,\mathrm{graph}}
\newcommand{\Hess}{\mathrm{Hess}}
\newcommand{\btr}[1]{\left\vert#1\right\vert}
\newcommand{\norm}[1]{\btr{\btr{#1}}}
\newcommand{\spann}[1]{\left\langle#1\right\rangle}
\newcommand{\trace}{\mathrm{tr}}
\newcommand{\Ric}{\mathrm{Ric}}
\newcommand{\scal}{\mathrm{R}}
\newcommand{\Rm}{\mathrm{Rm}}
\newcommand{\two}{\text{II}}
\newcommand{\tr}{\trace}
\newcommand{\dive}{\text{div}}
\theoremstyle{plain}
\newtheorem{thm}{Theorem}[section]
\newtheorem{prop}[thm]{Proposition}
\newtheorem{lem}[thm]{Lemma}
\theoremstyle{definition}
\newtheorem{bem}[thm]{Remark}
\newtheorem{kor}[thm]{Corollary}
\begin{document}
		\begin{center}
			{\LARGE {On effects of the null energy condition on totally umbilic hypersurfaces in a class of static spacetimes}\par}
		\end{center}
		\vspace{0.5cm}
		\begin{center}
			{\large Markus Wolff\footnote[2]{wolff@math.uni-tuebingen.de}}\\
			\vspace{0.4cm}
			{\large Department of Mathematics}\\
			{\large Eberhard Karls Universit\"at T\"ubingen}
		\end{center}
		\vspace{0.4cm}
		\section*{Abstract}
			We study the effects of the null energy condition on totally umbilic hypersurfaces in a class of static spacetimes, both in the spacelike and the timelike case, respectively. In the spacelike case, we study totally umbilic warped product graphs and give a full characterization of embedded surfaces with constant spacetime mean curvature using an Alexandrov Theorem by Brendle and Borghini--Fogagnolo--Pinamonti. In the timelike case, we achieve a characterization of photon surfaces with constant umbilicity factor similar to a result by Cederbaum--Galloway.\\\\
		\renewcommand{\thefootnote}{\arabic{footnote}}
		\setcounter{footnote}{0}
	\section{Introduction}
		The null energy condition, also known as null convergence condition, states that for any null vector field $L$ the Einstein-Tensor $\mathfrak{G}$ satisfies
		\begin{align}\label{introNEC}
			\mathfrak{G}(L,L)\ge 0.
		\end{align}
		In cosmology, this assumption on the spacetime takes a critical role in the formation of singularities (cf. \cite{hawkelli} Section 8.2). Moreover, in a class of static, warped product spacetimes and on their respective time-symmetric slices, it was used to establish versions of a (Spacetime) Alexandrov Theorem \cite{brendle,wangwangzang, wang}.
		In this paper, we study the effects of the (NEC) on both spacelike and timelike totally umbilic hypersurfaces in this class of static spacetimes.
		
		In the spacelike case, we study initial data sets given as warped product graphs and show that in the totally umbilic case an Alexandrov Theorem by Brendle becomes applicable if the null energy condition is satisfied.
		The classical Alexandrov Theorem \cite{aleks} in Euclidean space  states that any compact surface of constant mean curvature is a round sphere. In \cite{brendle}, Brendle generalized this to a large class of Riemannian warped product manifolds. Brendle proved that if certain conditions (H1)-(H3) are satisfied on the manifold, then any orientable, embedded, closed surface of constant mean curvature is necessarily totally umbilic \cite[Theorem 1.1]{brendle}. If an additional assumption (H4) is met Brendle showed that the surface is a leaf of the canonical foliation. After a suitable coordinate change using condition (H2), one can embed the Riemannian manifolds under consideration as time-symmetric $\{t=\operatorname{const.}\}$-slices into a certain Class $\mathfrak{H}$ of static spacetimes of the form
		\[
			\mathfrak{M}=\R\times I\times \mathcal{N},\text{ }\mathfrak{g}=-h\d t^2+\frac{1}{h}\d r^2+r^2g_{\mathcal{N}},
		\]
		where $h$ is a smooth, positive function on some open intervall $I$, and $(\mathcal{N},g_{\mathcal{N}})$ is a compact Riemannian manifold.
		Then, the conditions (H1) and (H3) translate into physically reasonable assumptions on the spacetime, more precisely, that the inner boundary is a non-degenerate Killing horizon and that the spacetime satisfies the null energy condition, see \cite{wangwangzang}. In \cite{wangwangzang}, Wang--Wang--Zhang used the results of Brendle in this spacetime setting to proof a spacetime Alexandrov theorem which characterizes round null cones if they admit a spacelike cross section satisfying a spacetime CMC condition (see \cite{wangwangzang} Section 3.2). Moreover, Wang \cite{wang} noted that  in spherical symmetry condition (H4) is already met under (H1)-(H3), and in a recent paper \cite{borghinifogagnolopinamonti}, Borghini--Fogagnolo--Pinamonti completely removed assumption (H4) as an application for a rigidity statement of the Heintze--Karcher inequality in substatic manifolds.
		
		Regarding the results in \cite{brendle, borghinifogagnolopinamonti}, the question whether these conditions on the spacetime allow for a similar characterization of surfaces not only on the time-symmetric slices, but general initial data sets with warped product structure, naturally arises, and one would expect that the respective {geometric property} of the surfaces also arises out of the structure of the spacetime. 
		Due to its Lorentz invariance, it seems natural to consider a characterization of surfaces that posses constant spacetime mean curvature  $\mathcal{H}^2$ (STCMC surfaces), where $\mathcal{H}^2$ is the Lorentzian length of the mean curvature vector $\vec{\mathcal{H}}$ in the spacetime. If the surface is contained within a time-symmetric initial data set this reduces to the Alexandrov Theorem proven in \cite{brendle, borghinifogagnolopinamonti}. A foliation of such STCMC surfaces at spacelike infinity has recently been used by Cederbaum--Sakovich \cite{cederbaumsakovich} in a center of mass formulation in the asymptotically flat setting. To apply the results of Brendle \cite{brendle}, we utilize a recent construction by Cederbaum and the author \cite{cedwolff}  of generalized Kruskal--Szekeres coordinates.
		
		In the timelike case, we study photon surfaces, i.e., timelike, totally umbilic hypersurfaces, within the same Class $\mathfrak{H}$ of static spacetimes. Imposing an assumption on the eigenvalues of the Ricci tensor as Brendle \cite{brendle} in the Riemannian setting, we obtain a short proof for a characterization of photon surfaces with constant umbilicity factor $\lambda$ in the spacetime setting. In a recent paper, Cederbaum--Galloway \cite{cedgal} achieved a similar characterization of photon surfaces in the same class of spacetimes assuming spherical symmetry and a nowhere locally spacetime conformally flat condition. Although they do not need to assume that $\lambda=\operatorname{const.}$, their arguments rely heavily on the spherical symmetry and can not be extended across a non-degenerate Killing horizon without additional assumptions. We include a detailed discussion and give an illustrative example of a $1$-parameter family within this Class $\mathfrak{H}$ of spacetimes that are locally spacetime conformally flat but satisfy the assumption on the eigenvalues of the Ricci tensor.
		\subsection*{Main results}
		
		\addtocounter{section}{3}
		\setcounter{thm}{7}
		Given a non-negative solution $x$ of an ordinary differential inequality \eqref{mainODI} related to the (NEC), we show that the results in \cite{brendle, borghinifogagnolopinamonti} can also be applied to spacelike warped product graphs with metric coefficient $h+x$ (Theorem \ref{thm_main1}). For some non-negative constant $C$, $x=Cs^2$  is an exact solutions of \eqref{mainODI}. We will refer to the resulting graphs as hyperboloids, and note that they are totally umbilic with constant umbilicity factor (cf. Corollary \ref{kor_hyperboloids}). In particular, the characterization of embedded CMC surfaces allows for a characterization of STCMC surfaces on hyperboloids. Note that this requires to extend the graphs past the non-degenerate Killing horizon into a suitable spacetime extension. Here, we utilize a recent construction of generalized Kruskal--Szekeres coordinates \cite{cedwolff}. See also \cite{brillhayward, schindagui}.
		\begin{thm}
			Let $h:(0,\infty)\to\R$ be a smooth function with finitely many, positive simple zeroes $r_1<\dotsc< r_N$, $(\mathcal{N},g_{\mathcal{N}})$ an $(n-1)$-dimensional Riemannian manifold $(n\ge 3)$. Let $(\mathfrak{M},\mathfrak{g})$ be the corresponding spacetime of Class $\mathfrak{H}$ with metric coefficient $h$ and fibre $\mathcal{N}$. Assume that the generalized Kruskal--Szekeres extension satisfies the (NEC) condition.\newline
			Then there exists a constant $C_0=C_0(h,h')\in(0,\infty]$ such that for any hyperboloid \linebreak $(M_T,g^T,K^T)$ with umbilicity factor $\lambda_T$ satisfying $\lambda_T^2<C_0$ we have:
			If $\Sigma\subset M_T$ is an orientable, closed, embedded hyper\-surface with constant spacetime mean curvature, then $\Sigma$ is a slice $\{s\}\times \mathcal{N}$.
		\end{thm}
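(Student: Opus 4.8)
The plan is to reduce the Lorentzian STCMC condition on $\Sigma$ to a purely Riemannian CMC condition on the graph $(M_T,g^T)$ and then to quote the Alexandrov-type result of Theorem \ref{thm_main1}. First I would recall from Corollary \ref{kor_hyperboloids} that a hyperboloid is totally umbilic with constant umbilicity factor, so its second fundamental form satisfies $K^T=\lambda_T\,g^T$. The decisive geometric input is a decomposition of the spacetime mean curvature vector $\vec{\mathcal{H}}$ of $\Sigma$, regarded as a codimension-two surface in $(\mathfrak{M},\mathfrak{g})$. Writing $\nu$ for the outward unit normal of $\Sigma$ inside $(M_T,g^T)$ and $e_0$ for the future-pointing timelike unit normal of $M_T$ in $\mathfrak{M}$, the normal bundle of $\Sigma$ in $\mathfrak{M}$ is spanned by $\{\nu,e_0\}$, and a standard Gauss-type computation yields
\[
\vec{\mathcal{H}}=H\,\nu-(\operatorname{tr}_\Sigma K^T)\,e_0,
\]
where $H$ denotes the mean curvature of $\Sigma$ in $(M_T,g^T)$.

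Second, inserting $K^T=\lambda_T\,g^T$ gives $\operatorname{tr}_\Sigma K^T=(n-1)\lambda_T$, which is a fixed constant since $\lambda_T$ is, so the Lorentzian length of $\vec{\mathcal{H}}$ reduces to
\[
\mathcal{H}^2=\mathfrak{g}(\vec{\mathcal{H}},\vec{\mathcal{H}})=H^2-(n-1)^2\lambda_T^2.
\]
Consequently $\Sigma$ has constant spacetime mean curvature if and only if $H^2$ is constant. On each connected component $H$ is continuous, and $H^2=\mathcal{H}^2+(n-1)^2\lambda_T^2$ is a non-negative constant, which forces $H$ itself to be constant componentwise. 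This turns the hypothesis into the statement that $\Sigma$ is a closed, orientable, embedded CMC hypersurface in the warped product graph $(M_T,g^T)$.

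Third, I would apply Theorem \ref{thm_main1} to $(M_T,g^T)$, whose warped product metric coefficient is $h+x$ with $x=Cs^2$ an exact solution of the ordinary differential inequality \eqref{mainODI}. The constant $C_0=C_0(h,h')$ is exactly the threshold guaranteeing that the perturbed coefficient $h+Cs^2$ still satisfies the conditions \emph{(H1)--(H3)} underlying the Alexandrov theorems of Brendle \cite{brendle} and Borghini--Fogagnolo--Pinamonti \cite{borghinifogagnolopinamonti}, with the bound $\lambda_T^2<C_0$ controlling $C$ through its relation to the umbilicity factor. Under this bound Theorem \ref{thm_main1} is applicable, and its conclusion identifies the CMC surface $\Sigma$ as a slice $\{s\}\times\mathcal{N}$, which is what we want.

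The main obstacle I anticipate is global rather than computational: a hyperboloid need not be compact, and the surface $\Sigma$ may approach or cross the non-degenerate Killing horizon associated with a zero $r_i$ of $h$. To run the Alexandrov argument one must pass to the completed picture furnished by the generalized Kruskal--Szekeres extension \cite{cedwolff} and verify that the (NEC) on this extension is precisely what secures \emph{(H1)--(H3)} for $h+Cs^2$ across the horizon. Pinning down how the threshold $C_0$ arises from preserving the simple-zero structure of $h$ together with the monotonicity and convexity conditions encoded in \eqref{mainODI} under the perturbation $x=Cs^2$ is the delicate point; once Theorem \ref{thm_main1} is in force, everything else follows from the umbilicity identity for $\mathcal{H}^2$ derived above.
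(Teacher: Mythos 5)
Your proposal is correct and follows essentially the same route as the paper: on a hyperboloid $P=\tr_\Sigma K^T=(n-1)\lambda_T$ is the same constant for every embedded surface, so $\mathcal{H}^2=H^2-P^2$ constant reduces STCMC to CMC, and then Theorem \ref{thm_main1} applies with the exact solution $x=\lambda_T^2 s^2$ of \eqref{mainODI}. The only point you leave open --- how $C_0$ is pinned down --- is handled in the paper by the same continuity argument you anticipate: $C_0$ is the supremum of those $C$ for which $h+Cs^2$ still has a positive first zero $r_T$ with $h_T'(r_T)>0$, which holds for small $C$ since $r_T\to r_N$ as $C\to 0$ and $r_N$ is a simple zero of $h$.
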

		Note that the Kruskal--Szekeres extension of the $(n+1)$-dimensional Schwarzschild spacetime  with positive masss satisfies all of the above assumptions with $C_0=\infty$.
		\begin{kor}
			Let $(\mathfrak{M},\mathfrak{g})$ denote the Schwarzschild spacetime with positive mass. Then any STCMC surface $\Sigma$ in an hyperboloid $(M_T,g^T,K^T)$ is a slice $\{s\}\times \mathbb{S}^{n-1}$.
		\end{kor}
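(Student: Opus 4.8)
The plan is to obtain the corollary as a direct specialization of Theorem \ref{thm_main1} to the Schwarzschild spacetime; the work consists only in verifying the hypotheses and in confirming the quantitative claim $C_0=\infty$ recorded in the remark preceding the statement. First I would identify the $(n+1)$-dimensional Schwarzschild spacetime of positive mass $m$ as a member of the Class $\mathfrak{H}$: it is precisely the spacetime with round fibre $(\mathcal{N},g_{\mathcal{N}})=(\Sbb^{n-1},g_{\text{round}})$ and metric coefficient
\[
  h(r)=1-\frac{2m}{r^{n-2}},\qquad r\in(0,\infty).
\]
This $h$ is smooth on $(0,\infty)$ and has the single simple zero $r_1=(2m)^{1/(n-2)}$ (the event horizon), so the structural assumption on the zero set of $h$ in Theorem \ref{thm_main1} is met with $N=1$.

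Next I would verify the null energy condition on the generalized Kruskal--Szekeres extension. Since Schwarzschild is a vacuum solution it is Ricci-flat, so its Einstein tensor vanishes identically, $\mathfrak{G}\equiv 0$, and \eqref{introNEC} holds (with equality) for every null vector field $L$. As this property is preserved under the smooth extension, the (NEC) hypothesis of Theorem \ref{thm_main1} is satisfied.

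The crux is to establish $C_0=\infty$, i.e.\ that the threshold produced by Theorem \ref{thm_main1} places no restriction on the umbilicity factor $\lambda_T$. Recall from Corollary \ref{kor_hyperboloids} that the hyperboloids arise from the exact solutions $x=Cs^2$ of \eqref{mainODI}, with $C$ determined by $\lambda_T$; the constant $C_0$ encodes the largest $\lambda_T^2$ for which the hypotheses underlying the Alexandrov theorems of \cite{brendle,borghinifogagnolopinamonti} persist on the associated hyperboloid and its extension. Here I would insert the explicit coefficient $h(r)=1-2m\,r^{2-n}$ into \eqref{mainODI} and check that, because the Einstein tensor vanishes identically, the governing inequality holds for every non-negative $C$ with no upper obstruction; tracing the correspondence between $C$ and $\lambda_T$ then yields $C_0=\infty$. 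I expect this verification — confirming that the obstruction measured by $C_0$ disappears precisely because $\mathfrak{G}\equiv 0$ — to be the single genuinely computational step.

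Finally, with $C_0=\infty$ the hypothesis $\lambda_T^2<C_0$ of Theorem \ref{thm_main1} is satisfied vacuously for every hyperboloid $(M_T,g^T,K^T)$. Applying the theorem, any orientable, closed, embedded hypersurface $\Sigma\subset M_T$ of constant spacetime mean curvature must be a slice $\{s\}\times\Sbb^{n-1}$, which is the assertion.
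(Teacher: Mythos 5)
Your overall strategy is the paper's: specialize the main hyperboloid theorem to Schwarzschild, verify the (NEC) on the Kruskal--Szekeres extension via Ricci-flatness, and observe that the threshold constant is $C_0=\infty$. Two remarks on bookkeeping: the constant $C_0$ and the STCMC conclusion belong to Theorem \ref{thm_main2}, not Theorem \ref{thm_main1} (the latter only treats CMC surfaces), and the reduction from STCMC to CMC is not automatic — it uses that on a hyperboloid $P=\tr_\Sigma K^T=(n-1)\lambda_T$ is the same constant for every embedded $\Sigma$ (Corollary \ref{kor_hyperboloids}), so $\mathcal{H}^2=H^2-P^2$ constant forces $H$ constant. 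Your identification of Schwarzschild as a member of Class $\mathfrak{H}$ and the (NEC) verification are both correct.

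The genuine gap is in your justification of $C_0=\infty$. The constant $C_0$ is not a threshold in the differential inequality \eqref{mainODI}: the function $x=Cs^2$ solves \eqref{mainODI} with equality for every $C\ge0$ in every spacetime of Class $\mathfrak{H}$ — including Minkowski, where hyperboloids are hyperbolic spaces carrying many off-center STCMC spheres — so the fact that \eqref{mainODI} ``holds for every non-negative $C$'' cannot be what removes the obstruction, and neither can $\mathfrak{G}\equiv 0$ (negative-mass Schwarzschild is equally vacuum, yet there $h_T=1-2ms^{2-n}+Cs^2>0$ everywhere and the hypothesis $r_T>0$ fails). What $C_0$ actually records, per the proof of Theorem \ref{thm_main2}, is the supremum of $C=\lambda_T^2$ for which $h_T=h+Cs^2$ still has a positive zero $r_T$ with $h_T'(r_T)>0$, i.e., for which the graph retains a non-degenerate minimal inner boundary so that Brendle's conditions (H1)--(H2) hold on $(M_T,g^T)$. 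The computation you must supply is therefore: for $m>0$ one has $h_T(s)=1-2ms^{2-n}+Cs^2\to-\infty$ as $s\to0^+$, $h_T(s)\to+\infty$ as $s\to\infty$, and $h_T'(s)=2m(n-2)s^{1-n}+2Cs>0$ on all of $(0,\infty)$, so for every $C\ge0$ there is exactly one positive zero and the derivative there is positive. This elementary monotonicity argument — where the positivity of the mass, not the vacuum equations, does the work — is the missing step; once it is in place the rest of your argument goes through.
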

		\addtocounter{section}{1}
		\setcounter{thm}{0}
		Further, if the (NEC) is strict in some sense (cf. Section \ref{sec_photonsurfaces}), this also allows for a characterization of photon surfaces $(\mathcal{P}^n,\mathfrak{p})$, i.e., timelike and totally umbilic hypersurfaces, with constant umbilicity factor.
		\begin{thm}
			Let $(\mathcal{P}^n,\mathfrak{p})$ be a connected photon surface with constant umbilicity factor $\lambda$ in a spacetime of Class $\mathfrak{H}$ or its respective generalized Kruskal--Szekeres extension as constructed in \cite{cedwolff}. Assume further, that $h$ satisfies
			\begin{align*}
			\frac{1}{2}h''+\frac{(n-3)}{2r}h'-\frac{n-2}{r^2}h+r^{-2}\Ric_{g_\mathcal{N}}(X,X)\not= 0
			\end{align*}
			for any unit tangent vector $X$ in $(\mathcal{N},g_\mathcal{N})$ on a dense set of radii in $(0,\infty)$.\newline Then $\mathcal{P}^n$ is either rotationally symmetric, or $\mathcal{P}^n$ is totally geodesic with parallel unit normal vector $\eta$ everywhere tangent to $\mathcal{N}$.
		\end{thm}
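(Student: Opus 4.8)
The plan is to extract a pointwise algebraic constraint on the unit normal $\eta$ of $\mathcal{P}^n$ from the Codazzi equation and to match it against the ambient curvature. Since $\mathcal{P}^n$ is totally umbilic with $\two=\lambda\,\mathfrak{p}$ and $\lambda$ is constant, the covariant derivative of the second fundamental form vanishes, so the Codazzi equation collapses to $\mathfrak{g}(\Rm(V,W)U,\eta)=0$ for all $V,W,U$ tangent to $\mathcal{P}^n$; the only care needed here is the Lorentzian sign bookkeeping for the spacelike normal of a timelike hypersurface. Tracing this identity over an orthonormal tangent frame, and using that $\mathfrak{g}(\Rm(\eta,\tau)\eta,\eta)=0$ by antisymmetry, yields $\Ric(\eta,\tau)=0$ for every $\tau\in T\mathcal{P}^n$. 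As $\eta$ is non-null, this forces $\Ric(\eta)\parallel\eta$, i.e. $\eta$ is an eigenvector of the ambient Ricci endomorphism.

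Next I would compute that endomorphism in the adapted frame $e_0=h^{-1/2}\partial_t$, $e_1=h^{1/2}\partial_r$, $e_a=r^{-1}\hat e_a$ with $\hat e_a$ orthonormal for $g_{\mathcal{N}}$. The Ricci tensor is block diagonal for the splitting $\mathrm{span}(e_0,e_1)\oplus T\mathcal{N}$: on the base block it is a single eigenvalue $\rho_{\mathrm{base}}=-\tfrac12 h''-\tfrac{n-1}{2r}h'$ (the coincidence of the temporal and radial eigenvalues being the familiar feature of static metrics of this form), while on $T\mathcal{N}$ it equals $r^{-2}\Ric_{g_{\mathcal{N}}}^{\sharp}$ plus a scalar multiple of the identity, with eigenvalue $\rho_f(X)$ in a unit fibre direction $X$. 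A direct computation gives $\rho_f(X)-\rho_{\mathrm{base}}=\tfrac12 h''+\tfrac{n-3}{2r}h'-\tfrac{n-2}{r^2}h+r^{-2}\Ric_{g_{\mathcal{N}}}(X,X)$, which is exactly the quantity in the hypothesis. Thus the assumption states precisely that the base and fibre Ricci eigenvalues never coincide, on a dense set of radii.

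With this identification, decompose $\eta=\eta_B+\eta_F$ into base and fibre parts. Because the Ricci endomorphism preserves the splitting and $\eta$ is an eigenvector, matching components shows that if $\eta_B\neq0$ its eigenvalue is $\rho_{\mathrm{base}}$, and if in addition $\eta_F\neq0$ then $\eta_F$ is a fibre eigenvector with the same eigenvalue, forcing $\rho_f(\hat\eta_F)-\rho_{\mathrm{base}}=0$ and contradicting the hypothesis at that radius. Hence at every radius of the dense set either $\eta_F=0$ or $\eta_B=0$. To globalise this I would rule out the mixed case everywhere: where $\eta_F\neq0$ one has $\eta\neq e_1$, so the tangential gradient $\nabla^{\mathcal{P}}r$ is nonzero and $r$ is non-constant near that point, whence its image contains an interval meeting the dense radius set, a contradiction. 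The closed sets $\{\eta_F=0\}$ and $\{\eta_B=0\}$ are then disjoint (their overlap would give $\eta=0$) and cover the connected $\mathcal{P}^n$, so exactly one is all of $\mathcal{P}^n$.

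Finally, in the case $\eta_F\equiv0$ the normal lies in $\mathrm{span}(e_0,e_1)$, so $T\mathcal{P}^n\supset T\mathcal{N}$ pointwise and $\mathcal{P}^n$ is a union of full fibres, i.e. rotationally symmetric. In the case $\eta_B\equiv0$ the normal is a unit fibre field, and evaluating umbilicity on $e_0$ gives $\two(e_0,e_0)=\mathfrak{g}(\nabla_{e_0}e_0,\eta)=0$ since $\nabla_{\partial_t}\partial_t$ is radial hence orthogonal to $\eta$, while $\two(e_0,e_0)=-\lambda$; therefore $\lambda=0$ and $\mathcal{P}^n$ is totally geodesic, after which $\nabla_V\eta$ has vanishing tangential part (shape operator $=0$) and vanishing normal part ($\eta$ unit), so $\eta$ is parallel and everywhere tangent to $\mathcal{N}$. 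I expect the principal obstacles to be technical rather than conceptual: fixing the Lorentzian sign conventions in the Codazzi and curvature identities, verifying the coincidence of the base eigenvalues, and making the connectedness argument fully rigorous, in particular handling the degenerate locus where $\eta$ is purely radial.
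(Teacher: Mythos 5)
Your proposal is correct and follows essentially the same route as the paper: the (traced) Codazzi equation with $\lambda$ constant forces $\mathfrak{Ric}(\cdot,\eta)=0$ on $T\mathcal{P}^n$, so $\eta$ is a Ricci eigenvector; the stated hypothesis is exactly the non-coincidence of the base eigenvalue $\beta=-\tfrac12\bigl(h''+\tfrac{n-1}{r}h'\bigr)$ with the fibre eigenvalues, which pins $\eta$ to be either base- or fibre-valued, and a connectedness/density argument plus the staticity computation $\lambda\,\mathfrak{p}(\partial_t,\partial_t)=-\mathfrak{g}(\nabla_{\partial_t}\partial_t,\eta)=0$ yields the dichotomy. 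The only cosmetic differences are that you derive the contracted Codazzi identity by tracing rather than quoting it, and you phrase the globalisation as a disjoint-closed-cover argument instead of the paper's iteration over components of the complement of the exceptional radius set.
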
\newpage
		This paper is structured as follows:
		In Section \ref{sec_prelim} we introduce basic notation and some preliminary Lemmas. In Section \ref{sec_graoh} we study the geometry of spacelike warped product graphs within spacetimes of Class $\mathfrak{H}$. In Section \ref{sec_spacelikesetting} we prove the characterization of STCMC surfaces on spacelike totally umbilic warped product graphs. In Section \ref{sec_photonsurfaces} we prove the characterization of photon surfaces in the timelike case and compare it to a result by Cederbaum--Galloway~\cite{cedgal}.
		\subsection*{Acknowledgements.}
		I would like to express my sincere gratitude towards my supervisor Carla Cederbaum for her guidance and helpfull discussions.
		\setcounter{section}{1}
	\section{Preliminaries}\label{sec_prelim}
	Let $(\mathfrak{M},\mathfrak{g})$ be a spacetime. We denote its Ricci curvature tensor, and scalar curvature by $\mathfrak{Ric}$ and $\mathfrak{R}$, respectively. The Einstein tensor is then given as $\mathfrak{G}\definedas \mathfrak{Ric}-\frac{1}{2}\mathfrak{R}\mathfrak{g}$, and we say that $(\mathfrak{M},\mathfrak{g})$ satisfies the \emph{null energy condition (NEC)} (or \emph{null convergence condition}), if 
	\[
		\mathfrak{G}(L,L)=\mathfrak{Ric}(L,L)\ge 0,
	\]
	for any null vector field $L\in\Gamma(T\mathfrak{M})$. If we consider an initial data set $(M,g,K)$ in $(\mathfrak{M},\mathfrak{g})$, i.e., a spacelike hypersurface $M$ of $\mathfrak{M}$ with induced metric $g$ and second fundamental form $K$ with respect to a future timelike unit normal $\vec{n}$, the Gauß-Codazzi equations imply the well-known \emph{constraint equations} on $(M,g,K)$:
	\begin{align*}
		2\mu&=\operatorname{R}-\btr{K}^2+(\tr_MK)^2,\\
		J&=\dive_M(K-\tr_MKg),
	\end{align*}
	where $\mu\definedas \mathfrak{G}(\vec{n},\vec{n})$, and $J(\cdot)\definedas \mathfrak{G}(\vec{n},\cdot)$ are called the \emph{energy- and momentum density}, respectively, and where $\operatorname{R}$ denotes the scalar curvature of $(M,g)$. Hence, for any unit vector field $V$ on $(M,g)$, the (NEC) implies
	\begin{align*}
		0&\le\mathfrak{Ric}(V\pm\vec{n},V\pm\vec{n})\\
		&=\mathfrak{Ric}(V,XV)\pm 2\mathfrak{Ric}(V,\vec{n})+\mathfrak{Ric}(\vec{n},\vec{n})\\
		&=\mu-\frac{1}{2}\mathfrak{R}\pm 2J(V)+\mathfrak{Ric}(V,V).
	\end{align*}
	
	Note that we use the following conventions for the Riemann curvature tensor $\operatorname{Rm}$, Ricci curvature tensor $\operatorname{Ric}$ and scalar curvature $\operatorname{R}$, respectively:
	\begin{align*}
	\Rm(X,Y,W,Z)&=\spann{\nabla_X\nabla_YZ-\nabla_Y\nabla_XZ-\nabla_{[X,Y]}Z,W},\\
	\Ric(X,Y)&=\tr\Rm(X,\cdot,Y,\cdot),\\
	R&=\tr \Ric.
	\end{align*}\newpage
	
	Troughout this paper, we consider the following class $\mathfrak{H}$ of spacetimes: We say an $(n+1)$-dimensional spacetimes $(\mathfrak{M},\mathfrak{g})$ ($n\ge 3$) is of \emph{class $\mathfrak{H}$ with metric coefficient $h$ and fibre $\mathcal{N}$} if $\mathfrak{M}=\R\times I\times \mathcal{N}$ for an open, non-negative interval $I=(r_H,\infty)$, $r_H\ge 0$, and some compact $(n-1)$-dimensional  Riemannian manifold $(\mathcal{N},g_{\mathcal{N}})$ such that
	\[
	\mathfrak{g}=-h(r)\d t^2+\frac{1}{h(r)}\d r^2+r^2g_{\mathcal{N}},
	\]
	for a smooth function $h\colon (0,\infty)\to \R$ that is strictly positive on $I$ and satisfies $h(r_H)=0$, if $r_H>0$. Hence, the inner boundary $\{r=r_H\}$ is a Killing horizon of $(\mathfrak{M},\mathfrak{g})$. We will denote coordinates on $\mathcal{N}$ with capital Roman letters. In spherical symmetry, i.e., ${(\mathcal{N},g_{\mathcal{N}})=(\Sbb^{n-1},\d\Omega^2)}$, where $\d\Omega^2$ denotes the round metric on $\Sbb^{n-1}$, these spacetimes compose a large class of physically significant models both in the context of isolated gravitating systems as well as in the context of cosmology, such as the Schwarzschild and Reissner-Nordström spacetime, and the de\,Sitter and anti-de\,Sitter spacetime. The properties of this class of spherically symmetric spacetimes are well understood and have been subject to extensive research, see e.g. Cederbaum--Galloway \cite{cedgal},  Schindler--Aguirre \cite{schindagui}. Note that the more general form has also been considered in other works, see e.g. Wang--Wang--Zhang \cite{wangwangzang}, Brill--Hayward \cite{brillhayward}, and a joint work of Cederbaum and the author \cite{cedwolff}. If we assume that $(\mathcal{N},g^{\mathcal{N}})$ has constant sectional curvature, then a spacetime of class $\mathfrak{H}$ is equipped with a Birmingham--Kottler metric, see e.g. \cite{birmi, kottler, chrugalpot}. In spherical symmetry we will adopt the notion of Cederbaum--Galloway and call them spacetimes of class $\SCal$.
	
	Let $(M_0, g^0)$ denote the time-symmetric ($K\equiv0$) time slice $\{t=0\}$ with induced metric
	\[
		g^0 =\frac{1}{h(r)}\d r^2+r^2g_\mathcal{N}.
	\]
	Note that the warped product manifolds considered by Brendle \cite{brendle} are precisely of the form $(M_0,g^0)$ after a change of coordinates if condition (H2) imposed by Brendle is satisfied. Then condition (H1) implies that $\{r=r_H\}$ is a non-degenerate Killing horizon in $(\mathfrak{M},\mathfrak{g})$ in the sense that $h'(r_H)\not=0$, cf. \cite[ Equation (12.5.16)]{wald}. Moreover, Wang--Wang--Zhang \cite{wangwangzang} pointed out that condition (H3) of Brendle precisely translates to $(\mathfrak{M},\mathfrak{g})$ satisfying the null energy condition. For the convenience of the reader, we collect the respective equivalences in the notation of this paper in the following lemma:
	\begin{lem}\label{lemma_NEC}
		Let $(\mathfrak{M},\mathfrak{g})$ be a spacetime of class $\mathfrak{H}$ with metric coefficient $h$, and let $f\definedas \sqrt{h}$ on $(r_H,\infty)$. Then the following are equivalent:
		\begin{enumerate}
			\item[\emph{(i)}] $(\mathfrak{M},\mathfrak{g})$ satisfies the (NEC).
			\item[\emph{(ii)}] $\Delta_0fg^0-\Hess_0f+f\Ric^0\ge 0$ on $M_0$, where $\Delta_0$, $\Hess_0$, and $\Ric^0$ denote the Laplacian, the Hessian and Ricci curvature with respect to $g^0$, respectively.
			\item[\emph{(iii)}] It holds
			\[
			\frac{1}{2}h''+\frac{(n-3)}{2r}h'-\frac{n-2}{r^2}h+r^{-2}\Ric_{g_\mathcal{N}}(X,X)\ge 0
			\]
			on $M_0$ for all unit tangent vector fields $X$ in $(\mathcal{N},g_{\mathcal{N}})$, where $\Ric_{g_\mathcal{N}}$ denotes the Ricci curvature on $\mathcal{N}$.
			\item[\emph{(iv)}] The function $x=h-\alpha\colon (r_H,\infty)\to \R$ is a solution of the ordinary differential inequality
			\[
			\frac{1}{2}x''+\frac{(n-3)}{2s}x'-\frac{(n-2)}{s^2}x\ge 0,
			\]
			where $(n-2)\alpha$ is the minimum of the smallest eigenvalue of $\Ric_{g_\mathcal{N}}$ on $\mathcal{N}$.
		\end{enumerate}
	\end{lem}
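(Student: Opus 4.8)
The plan is to prove the chain of equivalences (i) $\Leftrightarrow$ (ii) $\Leftrightarrow$ (iii) $\Leftrightarrow$ (iv).

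For (i) $\Leftrightarrow$ (ii), I would invoke the standard decomposition of the spacetime Ricci tensor of a static metric $\mathfrak{g}=-f^2\d t^2+g^0$ with lapse $f=\sqrt h$. Writing $\vec n=f^{-1}\partial_t$ for the future timelike unit normal of $M_0$, these formulas read $\mathfrak{Ric}(\vec n,\vec n)=f^{-1}\Delta_0 f$, $\mathfrak{Ric}(\vec n,V)=0$, and $\mathfrak{Ric}(V,V)=\Ric^0(V,V)-f^{-1}\Hess_0 f(V,V)$ for $V$ tangent to $M_0$. As already observed in the preliminaries, every null direction is, up to scale, of the form $V+\vec n$ for a $g^0$-unit vector $V$ tangent to $M_0$, so combining the three identities gives
\[
\mathfrak{Ric}(V+\vec n,V+\vec n)=\frac{1}{f}\left(\Delta_0 f\,g^0-\Hess_0 f+f\Ric^0\right)(V,V).
\]
Since $f>0$ on $(r_H,\infty)$ and $V$ ranges over all unit tangent directions of $M_0$, the (NEC) is equivalent to the tensor $\Delta_0 f\,g^0-\Hess_0 f+f\Ric^0$ being positive semi-definite, which is precisely (ii).

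For (ii) $\Leftrightarrow$ (iii), I would pass to the proper radial coordinate $s$ defined by $\tfrac{\d r}{\d s}=\sqrt h$, so that $g^0=\d s^2+r(s)^2 g_{\mathcal N}$ is a warped product over $(\mathcal N,g_{\mathcal N})$ with warping function $r$, and crucially $f=\sqrt h=\tfrac{\d r}{\d s}$ equals the derivative of the warping function. Feeding the warped-product expressions for $\Hess_0$, $\Delta_0$ and $\Ric^0$ into the tensor $T\definedas \Delta_0 f\,g^0-\Hess_0 f+f\Ric^0$, the mixed radial-fibre components vanish by the warped-product structure, and a direct check shows that the purely radial component $T(\partial_s,\partial_s)$ vanishes identically (which is exactly why no radial condition appears in (iii)). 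For a $g_{\mathcal N}$-unit vector $X$ tangent to the fibre one then computes $T(X,X)=\sqrt h\cdot\big(\tfrac12 h''+\tfrac{n-3}{2r}h'-\tfrac{n-2}{r^2}h+r^{-2}\Ric_{g_{\mathcal N}}(X,X)\big)$. Since $T(V,V)$ for a general $V=a\partial_s+X$ reduces to $T(X,X)$, and $\sqrt h>0$, positive semi-definiteness of $T$ is equivalent to the bracketed inequality holding for all such $X$, i.e. to (iii).

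Finally, for (iii) $\Leftrightarrow$ (iv), at each fixed radius the left-hand side of (iii) is smallest exactly when $\Ric_{g_{\mathcal N}}(X,X)$ attains its global minimum over unit vectors $X$ and over $\mathcal N$; by compactness of $\mathcal N$ this minimum is the constant $(n-2)\alpha$. Substituting it and using that $\alpha$ is constant, so that $x'=h'$ and $x''=h''$ for $x=h-\alpha$, turns (iii) into the ordinary differential inequality $\tfrac12 x''+\tfrac{n-3}{2s}x'-\tfrac{n-2}{s^2}x\ge 0$ of (iv). The routine but most delicate part is the warped-product computation in the second step: one must keep track of the difference between $g^0$-unit and $g_{\mathcal N}$-unit fibre vectors (they differ by the factor $r$) and of the third-order term $\tfrac{\d^3 r}{\d s^3}=\tfrac12 h''\sqrt h$. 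The observations that $f$ equals the warping-function derivative and that $T(\partial_s,\partial_s)\equiv 0$ are what keep this calculation under control.
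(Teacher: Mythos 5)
Your proof is correct. Be aware, though, that the paper does not actually prove the equivalences (i)$\Leftrightarrow$(ii)$\Leftrightarrow$(iii) itself: it defers to Proposition~2.1 of Brendle \cite{brendle} and Lemma~3.8 of Wang--Wang--Zhang \cite{wangwangzang}, and only records that (iv) is immediate from compactness of $\mathcal{N}$. Your write-up therefore supplies precisely the computations the paper outsources, and it follows the same standard route as those references: the static splitting $\mathfrak{Ric}(\vec n,\vec n)=f^{-1}\Delta_0 f$, $\mathfrak{Ric}(\vec n,V)=0$, $\mathfrak{Ric}(V,W)=\Ric^0(V,W)-f^{-1}\Hess_0 f(V,W)$ gives (i)$\Leftrightarrow$(ii), using that every nonzero null vector is a nonzero multiple of $\vec n+V$ with $V$ a unit spatial vector and that staticity lets one test the (NEC) along $M_0$ alone. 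The two points you flag as delicate both check out: with $\d r/\d s=\sqrt h$ one has $\ddot r=\tfrac12 h'$ and $\tfrac{\d^3r}{\d s^3}=\tfrac12 h''\sqrt h$, the radial component of $\Delta_0 f\,g^0-\Hess_0 f+f\Ric^0$ vanishes identically, and the fibre component equals $\sqrt h$ times the left-hand side of (iii); this can be cross-checked directly in the $r$-coordinate against the curvature identities of Lemma~\ref{lem_intrinsiccurvature1} with $h_T=h$. Only cosmetic quibbles remain: a general unit vector should be written $V=a\partial_s+b\hat X$ with $\hat X$ a $g^0$-unit fibre vector, so that $T(V,V)=b^2T(\hat X,\hat X)$ after the mixed terms are seen to vanish, and in (iii)$\Leftrightarrow$(iv) one should note explicitly that the infimum $(n-2)\alpha$ of $\Ric_{g_\mathcal{N}}(X,X)$ over unit vectors and over points of $\mathcal{N}$ is attained by compactness, which is exactly the paper's one-line justification of (iv).
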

	For the equivalences (i) to (iii), we refer to the respective results of Brendle and Wang--Wang--Zhang, see Proposition 2.1. in \cite{brendle} and Lemma 3.8 in \cite{wangwangzang}, where $\alpha$ is the same as the constant $\rho$ considered by Brendle \cite{brendle} as above\footnote{Note the different conventions for the functions $f$, $h$ used here compared to \cite{brendle} and \cite{wangwangzang}.}. The fourth equivalence is immediate, since we assume $\mathcal{N}$ to be compact. We moreover observe that
	\[
	L^*_g(f)=-\Delta_0fg^0+\Hess_0f-f\Ric^0,
	\]
	where $L^*_g$ denotes the formal $L^2$ adjoint of the linearization of the scalar curvature operator, cf.  \cite{corvino} Lemma 2.2 . Thus, the (NEC) implies the existence of a non-trivial supersolution $f>0$ of the formal $L^2$ adjoint of the linearization of the scalar curvature operator. As equality for the (NEC) implies a non-trivial kernel, we can conclude from \cite{corvino} Lemma 2.3  that then the scalar curvature $R^0$ of the time-symmetric slices must necessarily be constant in this case. See also Remark \ref{bem_NEC} below.
	
	Wang \cite{wang} further noticed that the (NEC) is in fact also related to an eigenvalue analysis of the Ricci curvature tensor  $\Ric^0$ of the time-symmetric time slices. Namely, the (NEC) implies monotonocity for the difference between the eigenvalue $h\Ric^0_{rr}$ and any eigenvalue of $\Ric^0\vert_{T\mathcal{N}\times T\mathcal{N}}$, cf. in \cite[Lemma 5.1]{wang}. Analogous to spherical symmetry, we can establish this monotonicity in the general case by direct computation:
	\begin{lem}\label{lem_monotonicity}
		Let $(\mathfrak{M},\mathfrak{g})$ be a spacetime of class $\mathfrak{H}$ satisfying the (NEC), and let $X$ be a unit tangent vector in $(\mathcal{N},g_{\mathcal{N}})$. Then
		\begin{align}\label{eq_lemmonotone}
		r^n\left(\frac{(n-2)}{2r}h'-\frac{(n-2)}{r^2}h+r^{-2}\Ric_{g_\mathcal{N}}(X,X)\right)
		\end{align}
		is monotone non-decreasing in $r$.
	\end{lem}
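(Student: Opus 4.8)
The plan is to reduce the claim to the (NEC) characterization already recorded in Lemma \ref{lemma_NEC} (iii) by a direct differentiation. First I would observe that for a fixed unit tangent vector $X$ at a point of $\mathcal{N}$ the quantity $\Ric_{g_\mathcal{N}}(X,X)$ is a real number that does not depend on $r$; hence the expression \eqref{eq_lemmonotone} is genuinely a function of the single variable $r$, and monotonicity is to be checked fibrewise. Writing $\kappa\definedas\Ric_{g_\mathcal{N}}(X,X)$ and expanding the prefactor $r^n$, the function to be studied becomes
\[
F(r)=\frac{n-2}{2}r^{n-1}h'-(n-2)r^{n-2}h+r^{n-2}\kappa .
\]

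Next I would differentiate $F$ in $r$ and collect terms. Applying the product rule to each summand produces one $h''$ term, several $h'$ terms, and an $h$ term together with the $\kappa$ term. The key simplification is that the two $h'$ contributions, one from differentiating $r^{n-1}h'$ and one from the product rule on $r^{n-2}h$, recombine through the identity $\tfrac{n-1}{2}-1=\tfrac{n-3}{2}$. After factoring out the common positive quantity $(n-2)r^{n-1}$ one obtains
\[
F'(r)=(n-2)\,r^{n-1}\left(\frac{1}{2}h''+\frac{(n-3)}{2r}h'-\frac{n-2}{r^2}h+r^{-2}\Ric_{g_\mathcal{N}}(X,X)\right).
\]

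I would then conclude by recognizing the bracketed factor as exactly the left-hand side of the inequality in Lemma \ref{lemma_NEC} (iii), which is non-negative on $M_0$ precisely because $(\mathfrak{M},\mathfrak{g})$ satisfies the (NEC). Since $n\ge 3$ forces $(n-2)r^{n-1}>0$ on $(r_H,\infty)$, it follows that $F'(r)\ge 0$, so $F$ is monotone non-decreasing, as asserted.

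I do not anticipate a genuine analytic obstacle here: the entire content of the statement is that the particular prefactor $r^n$ is chosen so that the derivative of \eqref{eq_lemmonotone} reproduces, up to the positive factor $(n-2)r^{n-1}$, the (NEC) expression of Lemma \ref{lemma_NEC} (iii). The only place demanding care is the bookkeeping in the differentiation, in particular verifying that the $h'$-coefficient collapses to $\tfrac{n-3}{2r}$ rather than some other value; once that cancellation is confirmed, the monotonicity is immediate.
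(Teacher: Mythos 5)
Your computation is correct: differentiating $F(r)=\tfrac{n-2}{2}r^{n-1}h'-(n-2)r^{n-2}h+\kappa r^{n-2}$ indeed yields $F'(r)=(n-2)r^{n-1}$ times the expression in Lemma \ref{lemma_NEC} (iii), which is non-negative under the (NEC), and $(n-2)r^{n-1}>0$ for $n\ge 3$. This is exactly the ``direct computation'' the paper invokes (it does not spell out the proof), so your argument coincides with the intended one.
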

	Thus, this holds true for any unit eigenvector $X\in\Gamma(T\mathcal{N})$ of $\Ric_{g_\mathcal{N}}$, in particular for the minimum $(n-2)\alpha$, and condition (H4) of Brendle \cite{brendle} is equivalent to the monotone quantity \eqref{eq_lemmonotone} being strictly positive everywhere. Due to the monotonicity, it suffices to check this at the inner boundary, so condition (H4) is in particular implied by the boundary condition 
	\[
	h'(r_H)r_H+2\alpha>0,
	\]
	and hence immediate for $\alpha\ge 0$, cf. \cite[Remark 5.2]{wang}.
	Note that it suffices to assume that \eqref{eq_lemmonotone} is non-vanishing, cf. (H4') \cite[page 3]{brendle}. In a recent paper \cite{borghinifogagnolopinamonti}, Borghini--Fogagnolo--Pinamonti  obtain a similar rigidity statement in Class $\mathfrak{H}$ for strictly mean convex surfaces that satisfy equality for the substatic Heintze--Karcher inequality assuming (H1)--(H3), cf. \cite[Theorem 1.2]{borghinifogagnolopinamonti}. As already remarked by Brendle \cite[Section 6]{brendle}, CMC surfaces satisfy equality for the substatic Heintze--Karcher inequality which yields the desired Alexandrov Theorem even without imposing any version of condition (H4), cf. \cite[Corollary 1.3]{borghinifogagnolopinamonti}. As they need to verify an additional technical assumption in the case that $\Sigma$ is homologous to $\partial M$ to apply a splitting theorem \cite[Theorem 3.1]{borghinifogagnolopinamonti}, they state their results only in this case, whereas the nullhomologous case follows directly from \cite[Theorem 3.1]{borghinifogagnolopinamonti}.
	
	We close this section by briefly introducing our notation for spacelike codimension-$2$ surfaces $(\Sigma,\gamma)$ in a spacetime $(\mathfrak{M},\mathfrak{g})$. Recall that the \emph{vector-valued second fundamental form} $\vec{\two}$ of $\Sigma$ in $(\mathfrak{M},\mathfrak{g})$ is defined as
	\[
		\vec{\two}(V,W)=\left(\overline{\nabla}_VW\right)^\perp
	\]
	for all tangent vector fields $V, W\in\Gamma(T\Sigma)$, where $\overline{\nabla}$ denotes the Levi-Civita connection of $(\mathfrak{M},\mathfrak{g})$. Then, the \emph{codimension-$2$ mean curvature vector} $\vec{\mathcal{H}}$ of $\Sigma$ is given by the trace of $\vec{\two}$ with respect to $\gamma$, i.e, $\vec{\mathcal{H}}=\tr_\gamma\vec{\two}$. In the following, we will always assume that $(\Sigma,\gamma)$ is closed, and embedded as a hypersurface in an initial data set $(M,g,K)$, with second fundamental form $K$ with respect to a future timelike unit normel $\vec{n}$. Let $\nu$ denote a unit normal of $(\Sigma,\gamma)$ in $(M,g)$. Then $\vec{\mathcal{H}}$ admits the decomposition
	\[
		\vec{\mathcal{H}}=-H\nu+P\vec{n},
	\]
	where $H$ denotes the mean curvature of $\Sigma$ in $(M,g)$, and $P\definedas\tr_\Sigma K=\tr_MK-K(\nu,\nu)$. Moreover, the \emph{expansions} $\theta_\pm$ with respect to the null directions $l_\pm=\nu\pm\vec{n}$ are given by
	\[
		\theta_\pm\definedas H\pm P.
	\]
	Here, we define the \emph{spacetime mean curvature} of $\Sigma$ as
	\[
		\mathcal{H}^2\definedas H^2-P^2.
	\]
	Assuming that $\mathcal{H}^2>0$, this agrees with the notion of spacetime mean curvature by Ceder\-baum--Sakovich \cite{cederbaumsakovich} upon taking a square root. However, in general $\mathcal{H}^2$ might be at least locally negative. Indeed, surfaces where $\mathcal{H}^2<0$ everywhere along $\Sigma$ naturally occur in our setting, cf. Remark \ref{bem_STCMCspheres}, and are called \emph{trapped} in the context of General Relativity.
	Note that the expansions $\theta_\pm$ are the change of area of $\Sigma$ in the null directions $l_\pm$, and the spacetime mean curvature is the Lorentzian length of $\vec{\mathcal{H}}$, i.e.,
	\[
		\mathfrak{g}(\vec{\mathcal{H}},\vec{\mathcal{H}})=\mathcal{H}^2=\theta_+\theta_-.
	\]
	We say a surface $(\Sigma,\gamma)$ in an initial data set $(M,g,K)$ has \emph{constant expansion} (CE), and \emph{constant spacetime mean curvature} (STCMC), if $\theta_\pm=\operatorname{const.}$, and $\mathcal{H}^2=\operatorname{const.}$ respectively. In the special case when $\theta_\pm=0$, and $\mathcal{H}^2=0$, we call $\Sigma$ a \emph{marginally outer/inner trapped surfaces (MOTS/MITS)}, and a \emph{generalized apparent horizon}, respectively. Any MOTS/MITS is always a generalized horizon, but the converse is not true in general. See Carrasco--Mars \cite{carmars} for an explicit counterexample.
	
	\section{Warped product graphs in Class $\mathfrak{H}$}\label{sec_graoh}
	We consider spacelike warped product graphs over the canonical $\{t=0\}$ time slice $M_0$ in spacetimes of Class $\mathfrak{H}$. More precisely, we look at hypersurfaces $M_T$ of the form
	\[
	M_T={\{(T(s),s)\colon s\in(r_1,r_2)\}\times \mathcal{N} }
	\] 
	for some smooth function $T:(r_1,r_2)\to\R$ with $r_H\le r_1<r_2\le\infty$. We will refer to $T$ as the \emph{radial height function} of $M_T$. We further denote the induced metric and second fundamental form of $M_T$ as $g^T$ and $K^T$, respectively.
	
	Note that any spacelike slice in a static spacetime can always be written as a graph, so the above assumption is only restrictive in the sense that we assume that $T$ is only depending on $r$. For general graphical initial data sets $(M_T,g^T,K^T)$ given as $M_T=\graph_{M_0}T$, the spacelike condition yields a restriction on the gradient of $T$, i.e., that $1-h\btr{\nabla_0T}^2>0$, where $\nabla_0$ denotes the gradient on $M_0$. Using the computations of Cederbaum--Nerz \cite{cednerz} for graphs in general static spacetimes with coordinates $\{x^i\}$ on $M_0$, we get
	\begin{align*}
	\partial_i^T=\partial_i+T_{,i}\partial_t,\\
	g^T_{ij}=g_{ij}-hT_{,i}T_{,j},
	\end{align*}
	and the future timelike unit normal $\vec{n}$ is given by $\vec{n}=\frac{\partial_t+h\nabla_0T}{f\sqrt{1-h\btr{\nabla_0T}^2}}$ with $f\definedas\sqrt{h}$ as above. Moreover, the second fundamental form $K^T$ is given by\footnote{Note that there is a slight mistake in the formula in \cite{cednerz} which has been corrected in \cite{cederbaumsakovich}}
	\begin{align*}
	K^T(\partial_i^T,\partial_j^T)=\frac{f\Hess_0T(\partial_i,\partial_j)+T_{,i}f_{,j}+f_{,i}T_{,j}-h\spann{\nabla_0T,\nabla_0f}T_{,i}T_{,j}}{\sqrt{1-h\btr{\nabla_0T}^2}}.
	\end{align*}
	If $M_T=\{(T(s),s,x^I)\}$ embeds in Class $\mathfrak{H}$ with coordinates $\{s,x^I\}$, where $x^I$ denote (local) coordinates on $\mathcal{N}$, this yields
	\begin{align}
	\begin{split}\label{eq_graphmetric}
	g^T=\frac{1}{h_T(s)}\d s^2+s^2g_{\mathcal{N}},\\
	K^T=a_T(s)\d s^2+b_T(s)s^2g_{\mathcal{N}},
	\end{split}
	\end{align}
	with $s\equiv r$ along $M_T$, and where 
	\begin{align}\label{eq_graph2ndFF1}
	h_T&=\frac{h}{1-h\btr{\nabla_0T}^2},
	\end{align}
	\begin{align}\label{eq_graph2ndFF2}
	a_T&=\frac{fT''+f'T'(3-h\btr{\nabla_0T}^2)}{\sqrt{1-h\btr{\nabla_0T}^2}},
	\end{align}
	\begin{align}\label{eq_graph2ndFF3}
	b_T&=\frac{f^3T'}{s\sqrt{1-h\btr{\nabla_0T}^2}},
	\end{align}
	with the tangent vector fields $\partial_s=\partial_r+T'\partial_t$, $\partial_I$ and the future unit normal 
	\[\vec{n}^T=\frac{\partial_t+h\nabla_0T}{f\sqrt{1-h\btr{\nabla_0T}^2}}.\]
	In particular, we see from \eqref{eq_graphmetric} that $(M_T,g^T)$ is again a warped product manifold as considered in \cite{brendle, borghinifogagnolopinamonti} with $h_T\ge h$, and note that $K$ also satisfies a similar block diagonal form. Thus, we will refer to spacelike graphs $(M_T,g^T,K^T)$ such that $T=T(s)$ as (spacelike) warped product graphs.
	The main observation in this subsection is to see that both the intrinsic and extrinsic curvature for such spacelike warped product graphs are fully determined by the difference $h_T-h$ in Class $\mathfrak{H}$. This essentially follows from the following lemma:
	\begin{lem}\label{lemma_difference}
		\begin{align*}
		b_T^2&=\frac{h_T-h}{s^2},\\
		h_Ta_Tb_T&=\frac{h_T'-h'}{2s}.
		\end{align*}
	\end{lem}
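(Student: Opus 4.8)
The plan is to reduce both identities to explicit computations in the single variable $s$. First I would record that, since the radial height function depends only on the radial coordinate $r\equiv s$ and the time-symmetric metric is $g^0=\frac1h\d r^2+r^2g_{\mathcal N}$, the $g^0$-gradient of $T$ is $\nabla_0T=hT'\partial_r$, so that $\btr{\nabla_0T}^2=h(T')^2$ and hence $h\btr{\nabla_0T}^2=h^2(T')^2$. I would then abbreviate the spacelike factor by $w\definedas 1-h^2(T')^2$, so that the formulas \eqref{eq_graph2ndFF1}--\eqref{eq_graph2ndFF3} read
\[
h_T=\frac{h}{w},\qquad a_T=\frac{fT''+f'T'(2+w)}{\sqrt w},\qquad b_T=\frac{f^3T'}{s\sqrt w},
\]
where I have used $3-h^2(T')^2=2+w$ and keep $f=\sqrt h$, $f'=\frac{h'}{2\sqrt h}$ in reserve.

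For the first identity this setup makes the claim almost immediate: $b_T^2=\frac{f^6(T')^2}{s^2w}=\frac{h^3(T')^2}{s^2w}$, while $h_T-h=h\,\frac{1-w}{w}=\frac{h\cdot h^2(T')^2}{w}=\frac{h^3(T')^2}{w}$, since $1-w=h^2(T')^2$. Dividing the latter by $s^2$ gives exactly $b_T^2$.

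For the second identity I would avoid differentiating the quotient $h_T=h/w$ head-on and instead differentiate the first identity. Writing it as $h_T-h=s^2b_T^2$ and differentiating in $s$ yields $h_T'-h'=2sb_T^2+2s^2b_Tb_T'=2sb_T(sb_T)'$, so that $\frac{h_T'-h'}{2s}=b_T(sb_T)'$. It therefore suffices to prove the auxiliary identity $(sb_T)'=h_Ta_T$; multiplying it by $b_T$ then closes the argument and, pleasantly, sidesteps any division by $b_T$ at radii where $T'$ vanishes.

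The auxiliary identity is the only genuine computation. Since $sb_T=h^{3/2}T'w^{-1/2}$, I would differentiate using $(h^{3/2}T')'=\tfrac32h^{1/2}h'T'+h^{3/2}T''$ together with $w'=-2hh'(T')^2-2h^2T'T''$, and then repeatedly substitute $h^2(T')^2=1-w$ to collapse the higher powers of $hT'$ produced by the $w'$-term. The result simplifies to $(sb_T)'=w^{-3/2}\big(h^{3/2}T''+h^{1/2}h'T'+\tfrac12h^{1/2}h'T'w\big)$. On the other side, inserting $f'=\frac{h'}{2\sqrt h}$ into $h_Ta_T=h\big(fT''+f'T'(2+w)\big)w^{-3/2}$ gives precisely the same expression, which establishes the identity. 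The main obstacle is purely the bookkeeping in this last step---tracking the half-integer powers of $h$ and substituting $w'$ correctly---but there is no conceptual difficulty, since the structure $h_T=h/w$ forces every term to collapse.
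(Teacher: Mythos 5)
Your proof is correct and takes essentially the same route as the paper: the first identity via the computation $h_T-h=\frac{h^3(T')^2}{1-h\btr{\nabla_0T}^2}=s^2b_T^2$, and the second by differentiating that relation. The paper leaves the resulting ``straightforward computation'' implicit; your auxiliary identity $(sb_T)'=h_Ta_T$ simply carries it out explicitly (and checks out).
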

	\begin{proof}
		By \eqref{eq_graph2ndFF1} and using that $\btr{\nabla_0T}^2=h\cdot(T')^2$, we see that
		\begin{align}\label{prooflemma_difference_1}
		h_T-h=\frac{h}{1-h\btr{\nabla_0T}^2}-h=\frac{h^3\cdot(T')^2}{1-h\btr{\nabla_0T}^2}.
		\end{align}
		Thus, the first identity follows by taking a square of \eqref{eq_graph2ndFF3}. Taking a derivative of \eqref{prooflemma_difference_1} the second identity follows from straightforward computation.
	\end{proof}
	\begin{bem}\label{rem_destriptionbydifference}
		Further, the difference $h_T-h$ also uniquely determines the radial height function $T$ up to a choice of sign of the derivative and a constant of integration, as \[\btr{T'}=\frac{1}{h}\sqrt{\frac{h_T-h}{h_T}}.\]
		More precisely $(M_T,g^T,K^T)$ is fully determined by the choice of function $b_T$ with ${h_T=h+r^2b_T^2}$ and 
		\[
		T'=\frac{rb_T}{h\sqrt{h+r^2b_T^2}},
		\]
		up to a constant of integration.
	\end{bem}
	As a consequence, this rigid structure yields a full characterization of totally umbilic spacelike warped product graphs in Class $\mathfrak{H}$:
	\begin{kor}\label{kor_hyperboloids}
		Let $(M_T,g^T,K^T)$ be a spacelike warped product graph as above, and we further assume that $K^T=\lambda_T g^T$ for some smooth function $\lambda_T$. Then $\lambda_T$ is constant, and $(M_T,g^T,K^T)$ is fully determined by the choice $b_T=\lambda_T$ up to a shift in $t$-direction.
	\end{kor}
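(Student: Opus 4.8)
The plan is to convert the umbilicity hypothesis $K^T=\lambda_Tg^T$ into pointwise relations between the profile functions $a_T,b_T$ of \eqref{eq_graphmetric}, then combine these with Lemma \ref{lemma_difference} to force $\lambda_T$ to be constant, and finally appeal to Remark \ref{rem_destriptionbydifference} for the uniqueness statement.

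First I would read off components. Writing $g^T=\frac{1}{h_T}\d s^2+s^2g_{\mathcal{N}}$ and $K^T=a_T\d s^2+b_Ts^2g_{\mathcal{N}}$ as in \eqref{eq_graphmetric}, the tensorial identity $K^T=\lambda_Tg^T$ splits into its radial and its fibre part, yielding $a_T=\lambda_T/h_T$ and $b_T=\lambda_T$. In particular the fibre part already identifies $b_T=\lambda_T$, so it only remains to show that this quantity is constant.

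The heart of the argument is to substitute these two relations into Lemma \ref{lemma_difference}. Its first identity becomes $h_T=h+s^2\lambda_T^2$, while its second, after cancelling the common factor $h_T$, becomes $\lambda_T^2=(h_T'-h')/(2s)$. Differentiating the former gives $h_T'-h'=2s\lambda_T^2+2s^2\lambda_T\lambda_T'$; inserting this into the latter, the $\lambda_T^2$ terms cancel and one is left with $s\lambda_T\lambda_T'=0$. Since $s>0$ on $(r_1,r_2)$ this reads $(\lambda_T^2)'=0$, so $\lambda_T^2$ is constant; as $(r_1,r_2)$ is connected and $\lambda_T$ is smooth, $\lambda_T$ cannot jump between the two admissible signs and is therefore itself constant. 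I expect this cancellation to be the only real point: it is precisely the compatibility built into the two identities of Lemma \ref{lemma_difference} that rules out nonconstant umbilicity factors, so there is no substantive analytic obstacle beyond invoking connectedness of the domain to pass from $(\lambda_T^2)'=0$ to constancy of $\lambda_T$.

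With $\lambda_T$ now constant and $b_T=\lambda_T$, the final claim follows at once from Remark \ref{rem_destriptionbydifference}: the choice $b_T=\lambda_T$ fixes $h_T=h+r^2\lambda_T^2$ and determines the radial height function through $T'=r\lambda_T/(h\sqrt{h+r^2\lambda_T^2})$ up to a constant of integration. Since $T$ prescribes the $t$-coordinate of the graph, this remaining constant is exactly a shift in the $t$-direction, so $(M_T,g^T,K^T)$ is fully determined by $b_T=\lambda_T$ up to such a shift, completing the proof.
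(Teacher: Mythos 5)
Your proposal is correct and follows essentially the same route as the paper: both extract $h_Ta_T=\lambda_T=b_T$ from the umbilicity condition, combine the two identities of Lemma \ref{lemma_difference} to get an equation forcing $h_T-h=Cs^2$ (equivalently, $(\lambda_T^2)'=0$), and then invoke Remark \ref{rem_destriptionbydifference} for uniqueness up to a $t$-shift. The only cosmetic difference is that the paper solves the ODE $(h_T-h)'=\tfrac{2}{s}(h_T-h)$ explicitly, whereas you differentiate $h_T-h=s^2\lambda_T^2$ and cancel, which is the same computation.
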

	In the Minkowski spacetime, where $h=1$, we precisely recover the connected components of an hyperboloid centered around the origin with $h_T=1+\lambda_T^2s^2$, where the sign of $\lambda_T$ determines the choice of connected component. As a totally umbilic, spacelike warped product graph now corresponds to $h_T=h+\lambda_T^2s^2$ in the general case, we will similarly refer to such a graphs as a \emph{hyperboloid} in Class $\mathfrak{H}$.
	\begin{proof}
		Since $K^T=\lambda_T g^T$, we have
		\[
		h_Ta_T=\lambda_T=b_T,
		\]
		in particular
		\[
		h_Ta_Tb_T=b_T^2.
		\]
		Using Lemma \ref{lemma_difference}, we see that
		\[
		(h_T-h)'=\frac{2}{s}(h_T-h),
		\]
		and solving the ODE gives $h_T-h=Cs^2$, where necessarily $C\ge0$ since $h_T\ge h$. By Lemma~\ref{lemma_difference}
		\[
		\lambda_T^2=b_T^2=C,
		\]
		so $\lambda_T$ is constant. As $T$ is uniquely determined by $b_T=\lambda_T$ up to a constant of integration, the claim is proven.
	\end{proof}
	\begin{bem}
		Similarly, we can characterize all spacelike warped product graphs with \linebreak${\tr_{M_T}K^T\equiv C}$ via 
		\[
		h_T=h+\left(\frac{C}{n}s+\frac{c_1}{s^{n-1}}\right)^2
		\]
		for some real constant $c_1\in\R$. The choice $c_1=0$ corresponds to the totally umbilic case and we recover the hyperboloids. For $C=0$, we obtain a $1$-parameter family of maximal hypersurfaces. These CMC graphs have been considered by Bartnik--Simon \cite{bartniksimon} as barries in the Minkowski spacetime. See also \cite{bartik, gerhardt, LeeLee}.
	\end{bem}\newpage
	\begin{bem}
		Although the assumption of being a graph in the above sense is much more restrictive in the case of a timelike hypersurface, we can establish a similar warped product structure for timelike graphs with radial height function $T=T(s)$ where we now require that ${h\btr{\nabla_0T}^2-1>0}$. More precisely, we find that
		\begin{align*}
		g_T&=-\frac{1}{h_T}\d s^2+s^2g_{\mathcal{N}},\\
		K_T&=a_T\d s^2+b_Ts^2g_{\mathcal{N}},
		\end{align*}
		with 
		\begin{align*}
		h_T&=\frac{h}{h\btr{\nabla_0T}^2-1},\\
		a_T&=\frac{fT''+f'T'(3-h\btr{\nabla_0T}^2)}{\sqrt{h\btr{\nabla_0T-1}^2}},\\
		b_T&=\frac{f^3T'}{s\sqrt{h\btr{\nabla_0T}^2-1}},
		\end{align*}
		and find the relations
		\begin{align*}
		b_T^2&=\frac{h_T+h}{s^2},\\
		h_Ta_Tb_T&=-\frac{(h_T+h)'}{2s}.
		\end{align*}
		In the totally umbilic case, this leads to the same ODE system characterizing rotationally symmetric photon surfaces in class $\mathcal{S}$ derived by Cederbaum--Galloway \cite{cedgal}\footnote{This has been extended to Class $\mathfrak{H}$ by Cederbaum and the author in \cite{cedwolff}.}. In \cite{cedoliviasophia}, Cederbaum--Jahns--Vi\v{c}\'{a}nek Mart\'{i}nez fully characterize the behavior of solutions to this ODE, in particular showing that rotationally symmetric photon surfaces are either photon spheres or warped product graphs of the above sense away from singular radii.
		
		In particular $h_T=\lambda_T^2s^2-h$ with $\lambda_T\not=0$ constant, so up to dividing $h_T$ by $s^2$, the function determining the induced metric is the effective potential studied by Cederbaum--Jahns--Vi\v{c}\'{a}nek Mart\'{i}nez \cite{cedoliviasophia} in order the characterize solutions of the ODE system (away from singular radii where these coordinates break down).
	\end{bem}
\section{Characterization of STCMC surfaces in spacelike totally umbilic warped product graphs}\label{sec_spacelikesetting}
	In this section, we will always assume that $(M_T,g^T,K^T)$ is a spacelike warped product graph in Class $\mathfrak{H}$. Assuming the (NEC), we show that we can extend the characterization of STCMC surfaces in the time-symmetric $t=\operatorname{const.}$-slices\footnote{In time symmetry we have that $\mathcal{H}^2=H^2$, so STCMC surfaces are CMC surfaces and the characterization directly follows from \cite{brendle, borghinifogagnolopinamonti}} to totally umbilic warped product graphs by applying the Alexandrov Theorem \cite{brendle, borghinifogagnolopinamonti}.
\subsection{The (NEC) on spacelike warped product graphs}\,\newline
	We first show that we can rewrite the (NEC) along any spacelike warped product graph $(M_T,g^T,K^T)$ as a tensor inequality adapted to the slice. Recall that the (NEC) along $(M_T,g^T,K^T)$ equivalently implies that for any unit vector $V_T$ on $(M_T,g^T,K^T)$ 
	\begin{align}
	\begin{split}\label{NECslice}
	0\le&\,\mu_T-\frac{1}{2}\mathfrak{R}\pm 2J_T(V_T)+\mathfrak{Ric}(V_T,V_T)\\
	=&\,\mu_T-\frac{1}{2}\mathfrak{R}\pm 2J_T(V_T)+(\tr_TK^TK^T-(K^T)^2)(V_T,V_T)\\
	&+\Ric^T(V_T,V_T)-\mathfrak{Rm}(V_T,\vec{n}^T,V_T,\vec{n}^T),
	\end{split}
	\end{align}
	where we used the contracted Gauss equation in the last line. Here $\mu_T$, $J_T$ denote the energy and momentum density of $(M_T,g^T,K^T)$, respectively, and $\tr_T$ denotes the trace with respect to $g^T$. We will similarly denote the respective quantities on the $\{t=0\}$ time slice $M_0$ with a subscript $0$. We refer to Appendix A, where we collect and derive the well-known curvature quantities for warped product graphs and spacetimes of Class $\mathfrak{H}$ for the convenience of the reader.
	
	We further define an isomorphism between the tangent bundles of $M_T$ and $M_0$ in the following way: Let $V_T=c_1f_T(s)\partial_s+\frac{c_2}{s}X$ be a tangent vector field along $M_T$, where $X$ is a unit vector field tangent in $(\mathcal{N},g_{\mathcal{N}})$. We define the vector field $V_0$ tangent to $M_0$ as $V_0\definedas c_1f(s)\partial_r+\frac{c_2}{s}X$. Note that this isomorphism is not induced by an isometry between $M_T$ and $M_0$ unless $T=\operatorname{const.}$. Together with Lemma \ref{lemma_difference}, we establish the following:
	\begin{lem}\label{lem_constraintsWarpedproductgraphs}
		For $f_0\definedas f=\sqrt{h}$ as above, we find
		\begin{enumerate}
			\item[\emph{(i)}] $\mu_T=\mu_0=\frac{1}{2}R_0=\frac{1}{2}\mathfrak{R}+\frac{\Delta_0f_0}{f_0}$,
			\item[\emph{(ii)}] $J_T\equiv0$,
			\item[\emph{(iii)}] $\Ric^T(V_T,V_T)+(\tr_TK^TK^T-(K^T)^2)(V_T,V_T)=\Ric^0(V_0,V_0)$,
			\item[\emph{(iv)}] $\mathfrak{Rm}(V_T,\vec{n}^T,V_T,\vec{n}^T)=\mathfrak{Rm}(V_0,\partial_t,V_0,\partial_t)=\frac{\Hess_0f_0(V_0,V_0)}{f_0}$
		\end{enumerate}
	\end{lem}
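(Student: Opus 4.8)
The plan is to reduce all four identities to the explicit curvature quantities of Class $\mathfrak{H}$ collected in Appendix~A, combined with the relations of Lemma~\ref{lemma_difference}. The single structural feature driving every cancellation is the reciprocal form $\mathfrak{g}_{tt}\mathfrak{g}_{rr}=-1$ of the metric: it forces the spacetime Einstein tensor to be block diagonal (no $t$--$r$ and no base--fibre cross terms) with $\mathfrak{G}(\partial_t,\partial_t)=-h^2\,\mathfrak{G}(\partial_r,\partial_r)$, and it has a Riemann-tensor shadow $\Hess_{\mathrm{base}}r(e_t,e_t)=-\Hess_{\mathrm{base}}r(e_r,e_r)$ that I will use in (iv). Throughout I abbreviate $e_t=\tfrac1f\partial_t=\vec{n}^0$, $e_r=f\partial_r$ for the static orthonormal radial frame and $\hat X=\tfrac1s X$ for a unit fibre direction, and I use $\vec{n}^T=(f\sqrt{W})^{-1}(\partial_t+h^2T'\partial_r)$ with $W\definedas 1-h\btr{\nabla_0T}^2=1-h^2(T')^2$, together with the decomposition $V_T=c_1 f_T\partial_s+\tfrac{c_2}{s}X=c_1 e_r^T+c_2\hat X$ and its image $V_0=c_1 e_r^0+c_2\hat X$, both unit when $c_1^2+c_2^2=1$.

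For (i) and (ii) I would expand $\vec{n}^T$ in the coordinate basis. Because $\mathfrak{G}(\partial_t,\partial_r)=0$ and $\mathfrak{G}(\,\cdot\,,\partial_I)=0$, the energy density is $\mu_T=(f^2W)^{-1}\big(\mathfrak{G}(\partial_t,\partial_t)+h^4(T')^2\mathfrak{G}(\partial_r,\partial_r)\big)$; substituting $\mathfrak{G}(\partial_t,\partial_t)=-h^2\mathfrak{G}(\partial_r,\partial_r)$ turns the numerator into $W\,\mathfrak{G}(\partial_t,\partial_t)$, so the factor $W$ cancels and $\mu_T=f^{-2}\mathfrak{G}(\partial_t,\partial_t)=\mu_0$. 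For (ii) the same substitution gives $\mathfrak{G}(\vec{n}^T,\partial_s)=T'\big(\mathfrak{G}(\partial_t,\partial_t)+h^2\mathfrak{G}(\partial_r,\partial_r)\big)=0$, while $\mathfrak{G}(\vec{n}^T,\partial_I)=0$ by the block structure, so $J_T\equiv0$. Finally $\mu_0=\tfrac12 R_0$ is the time-symmetric Hamiltonian constraint, and $\tfrac12 R_0=\tfrac12\mathfrak{R}+\tfrac{\Delta_0 f_0}{f_0}$ follows from $\mu_0=\mathfrak{Ric}(\vec{n}^0,\vec{n}^0)+\tfrac12\mathfrak{R}$ together with the static identity $\mathfrak{Ric}(\vec{n}^0,\vec{n}^0)=\tfrac{\Delta_0 f_0}{f_0}$.

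For (iii) I would exploit that $g^T=\tfrac1{h_T}\d s^2+s^2g_{\mathcal{N}}$ has precisely the warped-product form of $g^0$ with $h$ replaced by $h_T$, so from Appendix~A, $\Ric^T(e_r^T,e_r^T)=-\tfrac{(n-1)h_T'}{2s}$ and $\Ric^T(\hat X,\hat X)=s^{-2}\Ric_{g_{\mathcal{N}}}(X,X)-\tfrac{h_T'}{2s}-\tfrac{(n-2)h_T}{s^2}$, and likewise for $\Ric^0$ with $h$. In the frame $\{e_r^T,\hat X\}$ the tensor $K^T$ is diagonal with radial eigenvalue $h_Ta_T$ and fibre eigenvalue $b_T$, so the radial and fibre components of $(\tr_T K^T)K^T-(K^T)^2$ are $(n-1)\,b_Th_Ta_T$ and $b_Th_Ta_T+(n-2)b_T^2$. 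Lemma~\ref{lemma_difference} rewrites these as $(n-1)\tfrac{h_T'-h'}{2s}$ and $\tfrac{h_T'-h'}{2s}+(n-2)\tfrac{h_T-h}{s^2}$; adding them to $\Ric^T$ component by component cancels every $h_T'$ and $h_T$ and reproduces exactly $\Ric^0(e_r^0,e_r^0)$ and $\Ric^0(\hat X,\hat X)$. Since both tensors are diagonal in this frame with vanishing off-diagonal entries, the quadratic forms agree under the isomorphism $V_T\mapsto V_0$, which is (iii).

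The delicate step, and the one I expect to be the main obstacle, is (iv): showing that the tidal tensor does not see the tilt of $\vec{n}^T$ relative to $\vec{n}^0$. Viewing $\mathfrak{g}$ as a warped product over the Lorentzian base $(t,r)$ with fibre $\mathcal{N}$ and warping function $r$, I would split $V_T=V_T^{\mathrm{rad}}+V_T^{\mathrm{fib}}$ with $V_T^{\mathrm{rad}}\in\mathrm{span}(\partial_t,\partial_r)$. The mixed term $\mathfrak{Rm}(V_T^{\mathrm{rad}},\vec{n}^T,V_T^{\mathrm{fib}},\vec{n}^T)$ vanishes since $R(\,\cdot\,,\cdot)\cdot$ on three base vectors stays horizontal. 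For the radial term, $V_T^{\mathrm{rad}}\perp\vec{n}^T$ and together they span the same $2$-plane $\mathrm{span}(\partial_t,\partial_r)$ as $e_r^0,\vec{n}^0$ with identical Gram determinant $-c_1^2$, so $\mathfrak{Rm}(V_T^{\mathrm{rad}},\vec{n}^T,V_T^{\mathrm{rad}},\vec{n}^T)$ equals its $M_0$ value because the sectional curvature of that fixed plane is independent of the slice. For the fibre term, expanding $\vec{n}^T$ and using that base--fibre mixed components vanish reduces it to $\tfrac{c_2^2}{W}\big(A+h^2(T')^2B\big)$ with $A=\mathfrak{Rm}(\hat X,e_t,\hat X,e_t)$, $B=\mathfrak{Rm}(\hat X,e_r,\hat X,e_r)$; the warped-product identity $\mathfrak{Rm}(\hat X,Z,\hat X,Z)=-\tfrac{\Hess_{\mathrm{base}}r(Z,Z)}{r}$ for unit fibre $\hat X$ and base $Z$, together with $\Hess_{\mathrm{base}}r(e_t,e_t)=-\Hess_{\mathrm{base}}r(e_r,e_r)$, yields $A=-B$, whence $A+h^2(T')^2B=WA$, the factor $W$ cancels, and the fibre term reduces to $c_2^2A$, again the $M_0$ value. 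Summing the three contributions gives $\mathfrak{Rm}(V_T,\vec{n}^T,V_T,\vec{n}^T)=\mathfrak{Rm}(V_0,\vec{n}^0,V_0,\vec{n}^0)$, and the final equality with $\tfrac{\Hess_0 f_0(V_0,V_0)}{f_0}$ is the static tidal identity: since $M_0$ is totally geodesic, tracing $\mathfrak{Ric}(X,Y)=\Ric^0(X,Y)-\mathfrak{Rm}(X,\vec{n}^0,Y,\vec{n}^0)$ against the known static relation $\mathfrak{Ric}(X,Y)=\Ric^0(X,Y)-f_0^{-1}\Hess_0 f_0(X,Y)$ gives $\mathfrak{Rm}(X,\vec{n}^0,Y,\vec{n}^0)=f_0^{-1}\Hess_0 f_0(X,Y)$. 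The crux throughout is that the tilt-dependence collapses precisely because of the reciprocal structure $\mathfrak{g}_{tt}\mathfrak{g}_{rr}=-1$, which is encoded both in $\mathfrak{G}(\partial_t,\partial_t)=-h^2\mathfrak{G}(\partial_r,\partial_r)$ and in $A=-B$.
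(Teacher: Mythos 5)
Your proof is correct, and for parts (i), (ii) and (iv) it takes a genuinely different route from the paper's. The paper proves (ii) intrinsically: it computes $\dive_{M_T}(K^T-\tr_TK^T g^T)$ from the Christoffel symbols, reduces it to the scalar $h_Ta_T-b_T-sb_T'$, and kills that via Lemma \ref{lemma_difference} together with a density/continuity argument to handle the locus where $b_T$ vanishes. You instead evaluate $J_T=\mathfrak{G}(\vec{n}^T,\cdot)$ directly in the ambient spacetime, using the block structure of $\mathfrak{G}$ and the degeneracy $\mathfrak{G}_{tt}=-h^2\mathfrak{G}_{rr}$ (equivalently, $\partial_t$ and $\partial_r$ lie in the same eigenspace of $\mathfrak{Ric}$, as the paper records in Section \ref{sec_photonsurfaces}); this makes $J_T(\partial_s)=T'\bigl(\mathfrak{G}_{tt}+h^2\mathfrak{G}_{rr}\bigr)=0$ immediate and sidesteps the case analysis entirely --- a cleaner argument, at the cost of having to verify $\mathfrak{Ric}^t{}_t=\mathfrak{Ric}^r{}_r$ from the appendix formulas rather than only the slice data. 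The same mechanism gives you $\mu_T=\mu_0$ in one line, where the paper instead runs the Hamiltonian constraint on $M_T$ and cancels using \eqref{eq_K_T_1}, \eqref{eq_K_T_2}. Your (iii) coincides with the paper's Appendix computation. For (iv) the paper expands $\mathfrak{Rm}(\cdot,\vec{n}^T,\cdot,\vec{n}^T)$ in $(t,r)$-coordinates and repackages it using $\d s=h_T(h^{-1}\d r-hT'\d t)$; your argument via O'Neill's formulas --- frame-independence of the sectional curvature of the fixed plane $\operatorname{span}(\partial_t,\partial_r)$ for the radial part, and $\Hess_{\mathrm{base}}r(e_t,e_t)=-\Hess_{\mathrm{base}}r(e_r,e_r)$ forcing the $W$-cancellation in the fibre part --- is more conceptual and isolates why the tilt of $\vec{n}^T$ is invisible. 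One small remark: what you actually prove is $\mathfrak{Rm}(V_T,\vec{n}^T,V_T,\vec{n}^T)=\mathfrak{Rm}(V_0,\vec{n}^0,V_0,\vec{n}^0)=\frac{\Hess_0f_0(V_0,V_0)}{f_0}$ with the \emph{normalized} $\vec{n}^0=f_0^{-1}\partial_t$; that is the version consistent with Lemma \ref{lem_riemannhessian} and with the right-hand side of (iv) (the unnormalized $\partial_t$ in the statement would introduce a factor $h$ by Lemma \ref{lem_intrinsiccurvature1}), so your reading is the correct one.
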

	\begin{bem}
		As the spacetime is static, it is unsurprising that with the above identities at hand Equation \eqref{NECslice} directly reduces to \Cref{lemma_NEC} (ii). However, as we aim to employ the result of Brendle \cite{brendle} directly on the slice, we will instead rewrite \eqref{NECslice} as a tensor inequality involving $f_T:=\sqrt{h_T}$.
		
		We also want to emphasize the vanishing momentum constraint $J_T\equiv0$, as the converse is also true in the following sense: If $(\widetilde{M},\widetilde{g},\widetilde{K})$ is an initial data set with warped product structure as above determined by the functions $\widetilde{h}$, $\widetilde{a}$, $\widetilde{b}$, then $\widetilde{J}\equiv 0$ if and only if $(\widetilde{M},\widetilde{g},\widetilde{K})$ embeds as a warped product graph into a spacetime of Class $\mathfrak{H}$ with $h:=\widetilde{h}-s^2\widetilde{b}^2$. 
		
		In \cite{cabrerawolff}, Cabrera\,Pacheco and the author construct non-time symmetric initial data sets from the constraint equations with similar warped product structure. In spherical symmetry, the above observation then allows one to further construct the maximal future development under the additional assumption $J\equiv0$.
	\end{bem}
	\begin{proof}[Proof of Lemma \ref{lem_constraintsWarpedproductgraphs}]
		In view of the Remark, we only prove (ii) and refer to the proof of (i), (iii), (iv) to Appendix \ref{appendix_curvature}. In coordinates, $J_T$ is given as
		\[
		(J_T)_i=(g^T)^{jk}\left(K_{ki,j}-\Gamma_{jk}^lK_{li}-\Gamma_{ij}^lK_{kl}\right)-\tr_TK_{,i},
		\]
		for indices $i,j,k,l\in \{s,I,J,K,L\}$. Using the block diagonal structure of $K$ and the well-known identities for the Christoffel symbols in class $\mathfrak{H}$, a direct computation shows that
		\[
		J^T=\frac{2}{s}\left(h_Ta_T-b_T-sb_T'\right)\d s.
		\]
		Thus, it remains to show that $h_Ta_T-b_T-sb_T'$ vanishes along $M_T$. As $b_T$ is in particular continuous, there exists a closed set $\mathcal{X}\subset I_T:=(r_1,r_2)$ of measure zero, such that $ I_T\setminus \mathcal{X}$ lies dense in $I_T$, and for all $s\in I_T\setminus \mathcal{X}$ there exists an open neighborhood $U_s$ of $s$, such that either $b_T\not=0$ or $b_T$ vanishes identically on $U_s$. In the first case, multiplying the equation by $b_T$ yields
		\[
		b_T\left(h_Ta_T-b_T-sb_T'\right)=h_Ta_T-b_T^2-sb_T'b_T=0,
		\]
		which vanishes by the identities of Lemma \ref{lemma_difference} and using
		\[
		2b_T'b_T=(b_T^2)'=\frac{h_T'-h'}{s^2}-2\frac{h_T-h}{s^3}.
		\]
		Since we assumed $b_T\not=0$ , we have $h_Ta_T-b_T-sb_T'=0$. On the other hand, if $b_T$ vanishes identically on a neighborhood, then $T=\operatorname{const.}$ by Remark \ref{rem_destriptionbydifference}, so $h_T=h$ and $a=0$. In particular $h_Ta_T-b_T-sb_T'=0$. Therefore $h_Ta_T-b_T-sb_T'$ vanishes on $I_T\setminus\mathcal{X}$. By continuity, it has to vanish on all of $I_T$, which concludes the proof of (ii).
	\end{proof}
	We now establish the relevant tensor inequality adapted to the  warped product graphs  $(M_T,g^T,K^T)$.
	\begin{prop}\label{prop_mainineq}
		Let $(\mathfrak{M},\mathfrak{g})$ be a spacetime of Class $\mathfrak{H}$ that satisfies the (NEC), and let $(M_T,g^T,K^T)$ be a warped product graph in $(\mathfrak{M},\mathfrak{g})$. Then, for all unit vector fields $V_T$ on $(M_T,g^T,K^T)$ we have
		\begin{align}\label{inequ_main}
		B^T(V_T,V_T)\le \frac{\Delta_Tf_T}{f_T}-\frac{\Hess_Tf_T(V_T,V_T)}{f_T}+\Ric^T(V_T,V_T),
		\end{align}
		where 
		\[
		B^T:=\left(\frac{1}{2}(h_T-h)''+\frac{(n-3)}{2s}(h_T-h)'-\frac{(n-2)}{s^2}(h_T-h)\right)s^2g_\mathcal{N}.
		\]
	\end{prop}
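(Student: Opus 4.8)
The plan is to compute the right-hand side of \eqref{inequ_main} using only the warped product structure of $(M_T,g^T)$, and then to feed in the (NEC) through \eqref{NECslice} and Lemma~\ref{lem_constraintsWarpedproductgraphs}. The key starting observation is that, by \eqref{eq_graphmetric}, the slice $(M_T,g^T)$ is itself a warped product of exactly the same type as the time-symmetric slice $(M_0,g^0)$, only with $h$ replaced by $h_T$ and $r$ by $s$. Hence the curvature computations behind the equivalence (ii)$\Leftrightarrow$(iii) of Lemma~\ref{lemma_NEC} apply verbatim to $(M_T,g^T)$ with $f_T=\sqrt{h_T}$ in place of $f=\sqrt{h}$. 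Writing $\mathcal{T}^T:=\Delta_Tf_T\,g^T-\Hess_Tf_T+f_T\Ric^T$, so that the right-hand side of \eqref{inequ_main} is precisely $\tfrac{1}{f_T}\mathcal{T}^T(V_T,V_T)$, these computations (collected in Appendix~\ref{appendix_curvature}) show that $\tfrac{1}{f_T}\mathcal{T}^T$ is block diagonal with respect to the splitting $\R\partial_s\oplus T\mathcal{N}$, that its radial component vanishes identically, and that on a unit vector $\hat{X}=\tfrac{1}{s}X$ with $X$ a unit tangent vector in $(\mathcal{N},g_\mathcal{N})$ it equals
\[
\frac{1}{f_T}\mathcal{T}^T(\hat{X},\hat{X})=\frac{1}{2}h_T''+\frac{(n-3)}{2s}h_T'-\frac{n-2}{s^2}h_T+s^{-2}\Ric_{g_\mathcal{N}}(X,X).
\]

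Next I would record the analogous evaluation of $B^T$. Since $B^T$ is purely tangential to $\mathcal{N}$, its radial component vanishes and $B^T(\hat{X},\hat{X})$ is just the bracketed coefficient in the definition of $B^T$; subtracting it from the displayed expression, the $\Ric_{g_\mathcal{N}}$-terms cancel and one finds
\[
B^T(\hat{X},\hat{X})=\frac{1}{f_T}\mathcal{T}^T(\hat{X},\hat{X})-\left(\frac{1}{2}h''+\frac{(n-3)}{2s}h'-\frac{n-2}{s^2}h+s^{-2}\Ric_{g_\mathcal{N}}(X,X)\right).
\]
The subtracted term is exactly the left-hand side of the scalar inequality in Lemma~\ref{lemma_NEC}(iii), which is non-negative precisely because $(\mathfrak{M},\mathfrak{g})$ satisfies the (NEC). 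In the spirit of the Remark following Lemma~\ref{lem_constraintsWarpedproductgraphs}, this is also exactly what \eqref{NECslice} reduces to once the identities of Lemma~\ref{lem_constraintsWarpedproductgraphs} are substituted: the energy term $\mu_T-\tfrac12\mathfrak{R}$ becomes $\tfrac{\Delta_0f_0}{f_0}$, the momentum term drops out by $J_T\equiv0$, and the remaining curvature terms assemble into $\tfrac{1}{f_0}\mathcal{T}^0(V_0,V_0)$ for the associated vector $V_0$ on $M_0$, where $\mathcal{T}^0:=\Delta_0f_0\,g^0-\Hess_0f_0+f_0\Ric^0$ carries the $h$-version of the above bracket.

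Finally I would assemble the inequality for an arbitrary unit $V_T$. Parametrising $V_T=c_1f_T\partial_s+\tfrac{c_2}{s}X$ with $c_1^2+c_2^2=1$ exactly as in the definition of the isomorphism $V_T\mapsto V_0$, block diagonality of both $B^T$ and $\mathcal{T}^T$ kills all cross terms, so that
\[
\frac{1}{f_T}\mathcal{T}^T(V_T,V_T)-B^T(V_T,V_T)=c_2^2\left(\frac{1}{2}h''+\frac{(n-3)}{2s}h'-\frac{n-2}{s^2}h+s^{-2}\Ric_{g_\mathcal{N}}(X,X)\right)\ge0
\]
by Lemma~\ref{lemma_NEC}(iii), which is precisely \eqref{inequ_main}. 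The only genuine work here is the warped product curvature computation establishing the block-diagonal form of $\mathcal{T}^T$ and the vanishing of its radial component; once these are available from Appendix~\ref{appendix_curvature}, the decoupling into radial and $\mathcal{N}$-directions and the cancellation of the $\Ric_{g_\mathcal{N}}$-terms make the comparison with $B^T$ immediate. I expect this curvature computation, together with keeping the two warped product structures (with coefficient $h$ on $M_0$ and $h_T$ on $M_T$) carefully distinct, to be the main point requiring care.
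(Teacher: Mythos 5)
Your proposal is correct, and the computational core coincides with the paper's: both arguments reduce to the identity that, for a unit vector $V_T=c_1f_T\partial_s+\tfrac{c_2}{s}X$, the right-hand side of \eqref{inequ_main} minus $B^T(V_T,V_T)$ equals $c_2^2\bigl(\tfrac{1}{2}h''+\tfrac{(n-3)}{2s}h'-\tfrac{n-2}{s^2}h+s^{-2}\Ric_{g_\mathcal{N}}(X,X)\bigr)$, whose non-negativity is exactly the (NEC). (Your two auxiliary claims check out against Appendix~\ref{appendix_curvature}: the radial component of $\Delta_Tf_T\,g^T-\Hess_Tf_T+f_T\Ric^T$ does vanish, and the tangential component is the $h_T$-version of the Lemma~\ref{lemma_NEC}(iii) expression.) Where you differ is in how the (NEC) is fed in. The paper routes it through the extrinsic geometry of the graph: it starts from \eqref{NECslice}, i.e.\ the Gauss-equation decomposition of $\mathfrak{Ric}(V_T\pm\vec n^T,V_T\pm\vec n^T)$, substitutes the constraint identities of Lemma~\ref{lem_constraintsWarpedproductgraphs} (in particular $J_T\equiv0$ and the $\mathfrak{Rm}(V_T,\vec n^T,V_T,\vec n^T)$ computation), and then shows the leftover is $B^T(V_T,V_T)$. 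You instead invoke the purely intrinsic equivalence (i)$\Leftrightarrow$(iii) of Lemma~\ref{lemma_NEC} and exploit that $(M_T,g^T)$ is a warped product of the same form as $(M_0,g^0)$ with $h$ replaced by $h_T$, so the whole statement becomes an algebraic identity in $h$ and $h_T$ plus the scalar inequality of Lemma~\ref{lemma_NEC}(iii). Your route is logically leaner --- it does not need $K^T$, the momentum constraint, or the isomorphism $V_T\mapsto V_0$ at all --- and it makes transparent the paper's later remark that $B^T\ge0$ means the Class~$\mathfrak{H}$ spacetime with coefficient $h_T$ inherits the (NEC) from the one with coefficient $h$. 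What the paper's longer route buys is the initial-data-set interpretation (energy/momentum densities, the role of the Gauss equation) that motivates Lemma~\ref{lem_constraintsWarpedproductgraphs} and the surrounding discussion; as a proof of the proposition itself, both are complete.
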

	\begin{proof}
		Note that
		\begin{align}
		\begin{split}\label{eq_K_T_1}
		(K^T)^2&=h_Ta_T^2\d s^2+b_T^2s^2g_{\mathcal{N}},\\
		\tr_{M_T}K^T&=h_Ta_T+(n-1)b_T,\\
		\vert K^T\vert^2&=h_T^2a_T^2+(n-1)b_T^2,
		\end{split}
		\end{align}
		and using Lemma \ref{lemma_difference}, direct computation shows
		\begin{align}\label{eq_K_T_2}
		\tr_{M_T}K^TK^T-(K^T)^2=\frac{(n-1)}{2}\frac{h_T'-h'}{sh_T}\d s^2+\left(\frac{h_T'-h'}{2s}+\frac{(n-2)}{s^2}(h_T-h)\right)s^2g_{\mathcal{N}}.
		\end{align}
		For a unit vector $V_T=c_1f_T\partial_s+\frac{c_2}{s}X$ on $M_T$, the (NEC) gives
		\[
		0\le \frac{\Delta_0f_0}{f_0}-\frac{\Hess_0f_0}{f_0}(V_0,V_0)+\Ric^T(V_T,V_T)+\left(\tr_T K^T K^T-(K^T)^2\right)(V_T,V_T)
		\]
		by Lemma \ref{lem_constraintsWarpedproductgraphs}.
		Note that a direct computation yields
		\begin{align*}
		&\left(\frac{\Delta_Tf_T}{f_T}-\frac{\Delta_0f_0}{f_0}\right)-\left(\frac{\Hess_Tf(V_T,V_T)}{f_T}-\frac{\Hess_0f_T(V_0,V_0)}{f_0}\right)-\left(\tr_{M_T}K^TK^T-K^2\right)(V_T,V_T)\\
		&=c_2^2\left(\frac{1}{2}(h_T-h)''+\frac{(n-3)}{2s}(h_T-h)'-\frac{(n-2)}{s^2}(h_T-h)\right)\\
		&=B^T(V_T,V_T).
		\end{align*}
		Inserting this into the tensor inequality above yields the claim.
	\end{proof}
	Provided that $B^T$ is positive semi-definite, the (NEC) on $(\mathfrak{M},\mathfrak{g})$ in particular implies the desired tensor inequality
	\begin{align}\label{eq_tensorineqslice}
	\frac{\Delta_Tf_T}{f_T}-\frac{\Hess_Tf_T(V,V)}{f_T}+\Ric^T(V,V)\ge 0
	\end{align}
	on general warped product initial data sets $(M_T,g^T,K^T)$. Moreover, $B^T\ge 0$ if and only if $x:=h_T-h$ is a non-negative function satisfying the linear ordinary differential inequality
	\begin{align}\label{mainODI}
	\frac{1}{2}x''+\frac{(n-3)}{2s}x'-\frac{(n-2)}{s^2}x\ge 0.
	\end{align}
	Note that this is the same differential inequality as in Lemma \ref{lemma_NEC} (iv) for the function $h-\alpha$, which is equivalent to the (NEC) on $(\mathfrak{M},\mathfrak{g})$. By linearity, we have that $h_T-\alpha$ solves the above differential inequality \eqref{mainODI}, which by Lemma \ref{lemma_NEC} implies that the spacetime of Class $\mathfrak{H}$ with metric coefficient $h_T$ (and the same fibre $\mathcal{N}$) satisfies the (NEC). 
	\begin{bem}\label{bem_NEC}
		The exact solutions of \eqref{mainODI} as an ODE are given by a $2$-parameter family of solutions of the form
		\[
		x=\frac{C_1}{s^{-n+2}}+C_2s^2.
		\]
		In spherical symmetry, $h=1+x$ correspond to the Schwarzschild de\,Sitter and Schwarzschild anti-de\,Sitter family depending on the sign of $C_2$, which describe the static, spherically symmetric Vacuum solutions of the Einstein Equations (with cosmological constant depending on $C_2$). These are precisely the spacetimes of class $\mathcal{S}$ such that the time-symmetric slices have constant scalar curvature. Compare \cite[Lemma 2.3]{corvino}.
	\end{bem}
	In this sense, if $B^T\ge0$, the spacetime of Class $\mathfrak{H}$ with metric coefficient $h_T$ inherits the (NEC) from the spacetime of Class $\mathfrak{H}$ with metric coefficient $h$. This makes apparent that we can use the (NEC) on $(\mathfrak{M},\mathfrak{g})$ to classify CMC surfaces on a large class of general warped product graphs $(M_T,g^T,K^T)$ provided we can extend the tensor inequality in an appropriate way until the first zero $r_T$ of $h_T$ to verify Brendle's condition (H1) in \cite{brendle}. As $h_T\ge h$ on $I$, a minimal, inner boundary of $(M_T,g^T)$ will in general be hidden behind a Killing horizon of $(\mathfrak{M},\mathfrak{g})$.
\subsection{Extending the graphs past the Killing horizon}\label{sec_extentsion}\,\newline
	We recall that Killing horizons arise in spacetimes of Class $\mathfrak{H}$ as zeros of $h$. We will always assume from now on that $h$ has finitely many positive, simple zeroes \[
	{r_0:=0<r_1<\dotsc<r_N=r_H<\infty=:r_{N+1}},
	\]
	such that all arising Killing horizons are non-degenerate, i.e., $h'(r_l)\not=0$ for all $1\le l\le N$. As (H2) is equivalent to the fact that the outermost Killing horizon $\{r=r_H\}$ is non-degenerate, this is rather a necessary than a restrictive assumption in lieu of applying the results of Brendle~\cite{brendle} and Borghini--Fogagnolo--Pinamonti \cite{borghinifogagnolopinamonti}.
	
	However, as $r\to r_N$ the $(t,r)$-coordinate system breaks down, and in general $M_T$ can no longer be described as a graph of a radial function $T$, since by Remark \ref{rem_destriptionbydifference} 
	\[
	\btr{T'}=\frac{1}{h}\sqrt{\frac{h_T-h}{h_T}}.
	\]
	We want to argue that we can extend the graph of $T$ past any Killing horizon $\{r=r_l\}$ for all $1\le l\le N$ with $h_T(r_l)>0$ in different coordinates and extend the notion of the (NEC) as an ordinary differential inequality on a suitable spacetime extension. As we only need this up to the inner boundary of $M_T$, i.e., $\{s=r_T\}$ for the largest zero $r_T$ of $h_T$, we always have that ${h_T(r_l)>h(r_l)=0}$ for all $1\le l\le N$ with $r_l>r_T$.
	
	Using the above assumptions that each Killing horizon is non-degenerate, Brill--Hayward \cite{brillhayward}, Schindler--Aguirre \cite{schindagui}, and Cederbaum and the author \cite{cedwolff} showed independently that a spacetime $(\mathfrak{M},\mathfrak{g})$ of Class $\mathfrak{H}$ admits a spacetime extension called the \emph{generalized Kruskal--Szekeres extension} under the above assumptions, that extends the radial coordinate $r$ to $(0,\infty)$ across the zeros of $h$. Throughout this section we will use similar conventions as in \cite{cedwolff}. In each coordinate chart, the spacetime extension is given by a warped product manifold $(\mathbb{P}_l\times \mathcal{N},\widetilde{g}_l)$, where
	\begin{align*}
	\mathbb{P}_l&\definedas\{(u,v)\in\R^2:uv\in \text{Im}(\Phi_l)\},\\
	\widetilde{g}_l&=(F_l\circ\rho)(\d u\otimes \d v+\d v\otimes \d u)+\rho^2g_{\mathcal{N}},
	\end{align*}
	with $\rho=\Phi_l^{-1}(uv)$, $F_l=\frac{2C_l}{\Phi_l'}$, where $\Phi_l$ is the unique strictly increasing solution of 
	\begin{align}\label{ODEKruskal}
	\frac{\Phi_l}{\Phi_l'}=C_lh,
	\end{align}
	on $(r_{l-1},r_{l+
		1})$ with $\Phi'(r_l)=1$, $C_l\definedas\frac{1}{h'(r_l)}$, $1\le l\le N$. Note that if $h>0$ on $(r_N,\infty)$, the original spacetime $(\mathfrak{M},\mathfrak{g})$ corresponds to $\{u,v>0\}$ in $\mathbb{P}_N\times \mathcal{N}$. Moreover, we have the explicit coordinate transformations between $(u,v)$ and $(t,r)$ coordinates
	\begin{align*}
	v(t,r)=\sqrt{\Phi_l(r)}\exp\left(\frac{1}{2C_l}t\right),\\
	u(t,r)=\sqrt{\Phi_l(r)}\exp\left(-\frac{1}{2C_l}t\right)
	\end{align*}
	on each coordinate patch $\R\times(r_j,r_{j+1})\times \mathcal{N}$ ($j\in\{l-1,l\}$), where $(t,r)$-coordinates are defined.
	A direct computation in the $(u,v)$-coordinates using Equation \eqref{ODEKruskal} gives
	\begin{align*}
	\mathfrak{Ric}_{vv}&=\mathfrak{Ric}_{uu}=0,\\
	\mathfrak{Ric}_{uv}&=-\frac{F_l}{2}\left(h''+\frac{(n-1)}{\rho}h'\right),\\
	\mathfrak{Ric}_{IJ}&=(\Ric_{g_\mathcal{N}})_{IJ}-\left((n-2)h+\rho h'\right)(g_\mathcal{N})_{IJ},
	\end{align*}
	cf. \cite[Proposition B.1]{cedwolff}.
	Let $L=a\partial_u+b\partial_v+\frac{c}{\rho}X$ be a null vector field, where $X$ is a unit vector in $(\mathcal{N},g_{\mathcal{N}})$. Then
	\[
	0=2F_lab+c^2.
	\]
	Using this identity, we see that
	\begin{align*}
	0&\le \mathfrak{Ric}(L,L)\\
	&=-F_lab\left(h''+\frac{(n-1)}{\rho}h'\right)+\frac{c^2}{\rho^2}\left(\Ric_{g_\mathcal{N}}(X,X)-(n-2)h+\rho h'\right)\\
	&=c^2\left(\frac{1}{2}(h-1)''+\frac{(n-3)}{2\rho}(h-1)'-\frac{(n-2)}{\rho^2}(h)+\Ric_{g_\mathcal{N}}(X,X)\right),
	\end{align*}
	which is again equivalent to the linear ODI \eqref{mainODI} for $x=h-\alpha$ by the same arguments as in the proof of Lemma \ref{lemma_NEC}. Therefore the spacetime extension satisfies the (NEC) if and only if $x=h-\alpha$ satisfies \eqref{mainODI} on $(0,\infty)$.
	
	A crucial observation in \cite{cedwolff} is that any solution $\Phi_l$ of \eqref{ODEKruskal} is of the form 
	\[
	\Phi_l=h\exp\left(\frac{R_l}{C_l}\right),
	\]
	where $R_l$ is a smooth function on $(r_{l-1},r_{l+1})$ uniquely determined up to a constant. Although the construction circumvents the need to do so, this yields a tortoise function $R^*$, i.e., a primitive of $\frac{1}{h}$, a-posteriori on each of the intervals $(r_{l-1},r_l)$ and $(r_l,r_{l+1})$ by defining $R^*\definedas C_l\ln(\btr{\Phi_l})$.
	
	Now, we notice that by Remark \ref{rem_destriptionbydifference} $T$ is a primitive of the function $\pm\frac{1}{\eta_T}$, where\linebreak ${\eta_T:=h\sqrt{\frac{h_T}{h_T-h}}}$ is well-defined and $\eta_T'(r_l)=h'(r_l)\not=0$ for any $r_l>r_T$. In particular, the construction of Cederbaum and the author \cite{cedwolff} yields that $T$ satisfies
	\[
	T=\pm\left(C_l\ln(\btr{h})+C_l\frac{1}{2}\ln\left(\frac{h_T}{h_T-h}\right)+\widetilde{R}_T\right)
	\]
	for some smooth function $\widetilde{R}_T$  on $(r_{l-1},r_{l+1})$. Using this explicit behavior of $T$, we see that in the $+$-case
	\begin{align*}
	v(s)=h\sqrt[4]{\frac{h_T-h}{h_T}}\exp\left(\frac{1}{2K}(R+\widetilde{R}_T)\right),\\
	u(s)=\sqrt[4]{\frac{h_T}{h_T-h}}\exp\left(\frac{1}{2K}(R-\widetilde{R}_T)\right),
	\end{align*}
	so $M_T$ extends smoothly across the horizon and crosses the horizon at $v=0$, $u=u(r_l)$, and similarly at $v=v(r_l)$, $u=0$ in the $-$-case. Therefore we can extend any warped product graph across any non-degenerate Killing horizon up to its minimal, inner boundary.\newline
\subsection{STCMC characterization on totally umbilic warped product graphs}\label{sec_maintheorem}\,\newline
	Combining the result of the previous subsections with the results of Brendle~\cite{brendle} and Borghini--Fogagnolo--Pinamonti \cite{borghinifogagnolopinamonti}, and of Cederbaum and the author \cite{cedwolff}, we acquire the following:
	\begin{thm}\label{thm_main1}
		Let $h:(0,\infty)\to\R$ be a smooth function with finitely many, positive simple zeroes $r_1<\dotsc< r_N$, $(\mathcal{N},g_{\mathcal{N}})$ an $(n-1)$-dimensional Riemannian manifold ($n\ge 2$). Let $(\mathfrak{M},\mathfrak{g})$ be the corresponding spacetime of Class $\mathfrak{H}$ with metric coefficient $h$ and fibre $\mathcal{N}$, and assume that the generalized Kruskal--Szekeres extension of $(\mathfrak{M},\mathfrak{g})$ satisfies the (NEC) condition.\newline
		Let $x$ be some non-negative function satisfying \eqref{mainODI}, and consider the warped product graph $M_T$, where $T$ is such that $h_T=h+x$. Then $M_T$ extends into the generalized Kruskal--Szekeres extension until its minimal inner boundary, corresponding to the first zero $r_T$ of $h_T$.\newline
		Additionally, if $r_T>0$ with $h_T'(r_T)>0$ then all compact CMC surfaces in $M_T$ are leaves $\{s\}\times \mathcal{N}$.
	\end{thm}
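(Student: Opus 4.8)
The plan is to recognise that the induced Riemannian manifold $(M_T,g^T)$ with $g^T=\frac{1}{h_T}\d s^2+s^2g_{\mathcal{N}}$ is precisely the canonical time-symmetric slice of the spacetime of Class $\mathfrak{H}$ with metric coefficient $h_T$ and the same fibre $\mathcal{N}$. Since both the CMC condition and the conclusion that $\Sigma$ is a leaf $\{s\}\times\mathcal{N}$ are intrinsic to $(M_T,g^T)$, the statement reduces to a purely Riemannian Alexandrov theorem for this warped product. The strategy is therefore to verify Brendle's hypotheses (H1)--(H3) for $(M_T,g^T)$ and then to invoke the results of \cite{brendle, borghinifogagnolopinamonti}.

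First I would record that the spacetime with coefficient $h_T$ inherits the (NEC). By assumption $x=h_T-h\ge0$ solves the linear differential inequality \eqref{mainODI}, and $h-\alpha$ solves \eqref{mainODI} by Lemma \ref{lemma_NEC}(iv) applied to the given spacetime; by linearity $h_T-\alpha=(h-\alpha)+x$ also solves \eqref{mainODI}, so Lemma \ref{lemma_NEC} yields that the Class $\mathfrak{H}$ spacetime with coefficient $h_T$ satisfies the (NEC). Equivalently, this is the tensor inequality \eqref{eq_tensorineqslice} obtained from Proposition \ref{prop_mainineq} once $B^T\ge0$. This is exactly Brendle's condition (H3) on $(M_T,g^T)$; condition (H2) holds automatically because $g^T$ is already in warped product normal form, and condition (H1), that the inner boundary be a non-degenerate Killing horizon, is precisely the hypothesis $h_T(r_T)=0$ with $h_T'(r_T)>0$.

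The extension statement I would handle exactly as in Section \ref{sec_extentsion}. Since $h_T\ge h$ and $x\ge0$, we have $h_T>0$ on $(r_N,\infty)$, hence the first zero $r_T$ of $h_T$ satisfies $r_T\le r_N$ and $h_T>0$ on $(r_T,\infty)$; in particular every Killing horizon $\{r=r_l\}$ of $h$ with $r_l>r_T$ is reached with $h_T(r_l)>0$. The radial height function degenerates there, as $\btr{T'}=\frac{1}{h}\sqrt{(h_T-h)/h_T}\to\infty$, but passing to the generalised Kruskal--Szekeres coordinates of \cite{cedwolff} the explicit expressions for $u(s),v(s)$ in Section \ref{sec_extentsion} show that $M_T$ extends as a smooth spacelike hypersurface across each such horizon, crossing at $\{v=0\}$ or $\{u=0\}$, down to $\{s=r_T\}$. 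As $r_T$ is a simple zero of $h_T$, the proper radial distance $\int \d s/\sqrt{h_T}$ to this boundary is finite and $\{s=r_T\}$ appears as a minimal horizon boundary of $(M_T,g^T)$, placing us in Brendle's class of warped product manifolds with a non-degenerate horizon.

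With (H1)--(H3) verified for $(M_T,g^T)$, \cite[Theorem 1.1]{brendle} shows that any orientable, closed, embedded CMC hypersurface $\Sigma\subset M_T$ is totally umbilic. To reach the conclusion that $\Sigma$ is a slice $\{s\}\times\mathcal{N}$ without assuming any form of condition (H4), I would invoke the Heintze--Karcher rigidity of Borghini--Fogagnolo--Pinamonti: CMC surfaces realise equality in the substatic Heintze--Karcher inequality, so \cite[Corollary 1.3]{borghinifogagnolopinamonti} identifies $\Sigma$ with a leaf of the canonical foliation (treating the nullhomologous and homologous cases as in \cite[Theorem 3.1]{borghinifogagnolopinamonti}). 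The hard part is the extension step: one must verify that the abstract warped product $(M_T,g^T)$ is genuinely complete up to the horizon boundary $\{s=r_T\}$, i.e. that the graph $M_T$, which breaks down in $(t,r)$-coordinates at every intermediate horizon of $h$, truly closes up smoothly in the Kruskal--Szekeres extension rather than terminating prematurely, which is exactly what the coordinate analysis of Section \ref{sec_extentsion} secures.
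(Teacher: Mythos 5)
Your proposal is correct and follows essentially the same route as the paper: the non-negative solution $x$ of \eqref{mainODI} yields the tensor inequality \eqref{eq_tensorineqslice} (equivalently, the Class $\mathfrak{H}$ spacetime with coefficient $h_T$ inherits the (NEC) by linearity), the extension across intermediate horizons is handled exactly by the generalized Kruskal--Szekeres coordinate analysis of Subsection \ref{sec_extentsion}, and the conclusion follows from \cite[Theorem 1.1]{brendle} together with \cite[Corollary 1.3]{borghinifogagnolopinamonti}. Your explicit verification of (H1)--(H3) and your remark that the statement is intrinsic to $(M_T,g^T)$ are both consistent with the paper's own discussion surrounding the theorem.
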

	\begin{proof}
		By the results in \cite{cedwolff} as summarized in Subsection \ref{sec_extentsion}, $(\mathfrak{M},\mathfrak{g})$ extends onto all positive radii, and the generalized Kruskal--Szekeres extension is covered by a countable, smooth atlas. As observed in the previous subsection the (NEC) implies that the ODI \eqref{mainODI} holds for $h-\alpha$ on all of $(0,\infty)$, and the graph $M_T$ of $T$ with $h_T=h+x$ is well defined across any non-degenerate Killing horizon up until the first zero $r_T$ of $h_T$.
		
		Since $x$ is a non-negative solution of \eqref{mainODI}, the tensor inequality \eqref{eq_tensorineqslice} holds, and by Remark \ref{rem_destriptionbydifference} there exists a graph $T$ such that $h_T=h+x$, where $T$ is uniquely determined by $x$ up to a choice of sign of $T'$ and a constant of integration. Now assume that the first zero $r_T$ of $h_T$ satisfies $h_T'(r_T)>0$. In particular, conditions (H1)-(H3) in \cite[Theorem 1.1]{brendle} are satisfied. Thus, any compact CMC surface in $M_T$ is totally umbilic. Moreover, by \cite[Corollary 1.3]{borghinifogagnolopinamonti} any compact CMC surface is in fact a leaf of the canonical foliation $\{s\}\times \mathcal{N}$. 
	\end{proof}
	Note that this result is independent of the extrinsic curvature $K$. Therefore it also suffices to the results in \cite{brendle, borghinifogagnolopinamonti} directly to the totally geodesic slices in the spacetime of Class $\mathfrak{H}$ with metric coefficient $h_T$, as this spacetime will satisfy the (NEC) by Remark \ref{bem_NEC}. Note that this observation is consistent with the duality of constant, positive mean curvature slices in spacetimes with zero cosmological constant and maximal slices in spacetimes with negative cosmological constant, cf. \cite{chruscieltod}. 
	
	We now want to incorporate the extrinsic curvature $K$ into our result. Due to the difficulty in adapting the methods in \cite{brendle, borghinifogagnolopinamonti} in the presence of $P=\tr_\Sigma K$ and its evolution, we restrict ourselves to the special case of totally umbilic warped product graphs, which we have fully characterized in Corollary \ref{kor_hyperboloids}.
	Note that on a hyperboloid as defined in Section \ref{sec_graoh}, $P=\tr_\Sigma K=(n-1)\lambda$ is constant and the same for any embedded surface $\Sigma$, so the evolution of $P$ along any deformation is trivial. Hence, any surface $\Sigma$ in $(M_T,g^T,K^T)$ has constant spacetime mean curvature $\mathcal{H}^2$, and constant expansion $\theta_\pm$, if and only if it is a CMC surface.
	Moreover, by Remark \ref{bem_NEC} $Cs^2$ is an exact solution of \eqref{mainODI}, so it in fact solves \eqref{mainODI} as an ODE. Using Theorem \ref{thm_main1}, we acquire our main result for totally umbilic warped product graphs in Class $\mathfrak{H}$.
	\begin{thm}\label{thm_main2}
		Let $h:(0,\infty)\to\R$ be a smooth function with finitely many, positive simple zeroes $r_1<\dotsc< r_N$, $(\mathcal{N},g_{\mathcal{N}})$ an $(n-1)$-dimensional Riemannian manifold ($n\ge 2$). Let $(\mathfrak{M},\mathfrak{g})$ be the corresponding spacetime of Class $\mathfrak{H}$ with metric coefficient $h$ and fibre $\mathcal{N}$. Assume that the generalized Kruskal--Szekeres extension satisfies the (NEC) condition.\newline
		Then there exists a constant $C_0=C_0(h,h')\in(0,\infty]$ such that for any hyperboloid \linebreak $(M_T,g^T,K^T)$ with umbilicity factor $\lambda_T$ satisfying $\lambda_T^2<C_0$ we have:
		If $\Sigma\subset M_T$ is an orientable, closed, embedded hyper\-surface with constant spacetime mean curvature, then $\Sigma$ is a leaf $\{s\}\times \mathcal{N}$.
	\end{thm}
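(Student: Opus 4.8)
The plan is to reduce the constant spacetime mean curvature condition on a hyperboloid to an ordinary CMC condition and then invoke Theorem~\ref{thm_main1} directly. By Corollary~\ref{kor_hyperboloids}, a hyperboloid $(M_T,g^T,K^T)$ with umbilicity factor $\lambda_T$ is precisely the warped product graph with $h_T=h+\lambda_T^2s^2$, so $x=\lambda_T^2s^2\ge0$ solves the differential inequality \eqref{mainODI} (with equality, cf.\ Remark~\ref{bem_NEC}). Since $K^T=\lambda_Tg^T$ with $\lambda_T$ constant, for any embedded hypersurface $\Sigma\subset M_T$ with unit normal $\nu$ one has $P=\tr_\Sigma K^T=\tr_{M_T}K^T-K^T(\nu,\nu)=n\lambda_T-\lambda_T=(n-1)\lambda_T$, which is constant and independent of $\Sigma$. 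Hence $\mathcal{H}^2=H^2-(n-1)^2\lambda_T^2$, and $\Sigma$ is STCMC if and only if $H^2$ is constant; on a connected $\Sigma$ this forces $H$ itself to be constant, so $\Sigma$ is CMC. It therefore suffices to apply the CMC conclusion of Theorem~\ref{thm_main1}, provided the largest zero $r_T$ of $h_T$ satisfies $r_T>0$ and $h_T'(r_T)>0$.

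The core of the argument is to produce the threshold $C_0$ guaranteeing this horizon condition. I would introduce the auxiliary function $\phi:=-h/s^2$, whose positive level sets encode the zeros of $h_T$ through the equivalence $h_T(s)=0\Leftrightarrow\phi(s)=\lambda_T^2$; a direct computation at such a zero yields $\phi'(r_T)=-h_T'(r_T)/r_T^2$, so $h_T'(r_T)>0$ is equivalent to $\phi$ crossing the level $\lambda_T^2$ transversally from above at $r_T$. Since $h_T\ge h$, with $h>0$ on $(r_N,\infty)$ and $h_T(r_N)=\lambda_T^2r_N^2>0$, every zero of $h_T$ lies strictly below the outermost horizon $r_N$, where $h<0$ and hence $\phi>0$. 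As $s\to r_N^-$ one has $\phi\to0^+$, so $\phi$ increases as $s$ decreases away from $r_N$; let $r_*<r_N$ be the first value at which $\phi$ attains a local maximum (and set $C_0=\infty$ if $\phi$ is monotone with no interior maximum, as happens for Schwarzschild), and put $C_0:=\phi(r_*)\in(0,\infty]$. By construction $\phi$ decreases strictly from $C_0$ to $0$ on $(r_*,r_N)$, so for every $\lambda_T^2<C_0$ the level $\lambda_T^2$ is attained there exactly once, at a transversal downcrossing $r_T$; this $r_T$ is the largest zero of $h_T$ and satisfies $r_T>0$ and $h_T'(r_T)>0$.

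With $C_0$ so defined the proof concludes at once: the generalized Kruskal--Szekeres extension satisfies the (NEC) by hypothesis and $x=\lambda_T^2s^2\ge0$ solves \eqref{mainODI}, so Theorem~\ref{thm_main1} applies and every compact CMC surface in $M_T$ is a leaf $\{s\}\times\mathcal{N}$; combined with the reduction of the first paragraph, every orientable, closed, embedded STCMC hypersurface in $M_T$ is such a leaf. I expect the main difficulty to lie in the second step, namely isolating $C_0$ so that it depends only on $h$ and $h'$ while ruling out the degenerate configuration $h_T'(r_T)=0$ --- equivalently, the largest zero of $h_T$ coinciding with a critical point of $\phi$ --- in which case condition (H1) underlying Theorem~\ref{thm_main1} would fail. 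The delicate point is to verify that the final descent of $\phi$ to $0$ at $r_N$ is genuinely monotone, so that the largest root is always a transversal crossing; this is precisely what the choice of $C_0$ as the first local maximum of $\phi$ below $r_N$ secures, and it is the only place where the detailed profile of $h$, rather than merely the (NEC), enters the argument.
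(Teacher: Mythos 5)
Your argument is essentially the paper's proof: the reduction of STCMC to CMC via the constancy of $P=(n-1)\lambda_T$ on a hyperboloid, the observation that $x=\lambda_T^2s^2$ is a (non-negative, exact) solution of \eqref{mainODI}, and the appeal to Theorem \ref{thm_main1} are exactly the steps taken there, the only difference being that the paper defines $C_0$ more tersely as the supremum of those $C$ for which $h_T=h+Cs^2$ retains a positive largest zero $r_T$ (with $r_T\to r_N$ as $C\to 0$) satisfying $h_T'(r_T)>0$, by continuity near $C=0$. Your explicit description of $C_0$ via the level sets of $\phi=-h/s^2$ is a nice refinement, but carries one overclaim: strict monotonicity of $\phi$ on $(r_*,r_N)$ does not force $\phi'(r_T)<0$ at the crossing (an inflection with horizontal tangent is possible), so the needed non-degeneracy $h_T'(r_T)>0$ is not fully secured by taking $C_0=\phi(r_*)$ --- one would still have to exclude the critical values of $\phi$ on that final branch, which is what the paper's supremum-over-good-$C$ definition builds in by fiat.
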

	\begin{proof}
		For $C=\lambda_T^2$ small enough, $h_T=h+Cs^2$ has at least one positive zero $r_T$ with $r_T\to r_N$ as $C\to0$. By continuity, we have $h'(q_T)>0$ for small enough $C$. We define $C_0$ as the supremum over all $C$, such that these conditions are still satisfied.
		Thus, Theorem \ref{thm_main1} applies to a hyperboloid $(M_T,g^T,K^T)$ satisfying $\lambda_T^2<C_0$, and any CMC surface $\Sigma$ is a leaf $\{s\}\times \mathcal{N}$. Since $(M_T,g^T,K^T)$ has constant umbilicity factor, any STCMC surface is a leaf $\{s\}\times \mathcal{N}$.
	\end{proof}
	Note that all assumptions are in particular satisfied for any constant $C\ge 0$ in the Kruskal--Szekeres extension of the Schwarzschild spacetime with positive mass $m>0$ corresponding to 
	\[
		h(r)=1-\frac{2m}{r^{n-2}}
	\]
	in spherical symmetry.
	\begin{kor}
		Let $(\mathfrak{M},\mathfrak{g})$ be the Schwarzschild spacetime with positive mass. Then any closed, embedded STCMC surface $\Sigma$ in an hyperboloid $(M_T,g^T,K^T)$ is a slice $\{s\}\times \mathbb{S}^{n-1}$.
	\end{kor}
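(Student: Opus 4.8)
The plan is to verify that Schwarzschild with positive mass meets every hypothesis of Theorem \ref{thm_main2} with the optimal constant $C_0=\infty$, so that the conclusion holds for \emph{all} hyperboloids without any smallness restriction on $\lambda_T$. First I would record that Schwarzschild of mass $m>0$ is a spacetime of Class $\mathfrak{H}$ (indeed of Class $\SCal$) with fibre $(\mathcal{N},g_\mathcal{N})=(\Sbb^{n-1},\d\Omega^2)$ and metric coefficient $h(r)=1-2mr^{-(n-2)}$. This $h$ is smooth and strictly increasing on $(0,\infty)$ with a single positive simple zero at $r_N=(2m)^{1/(n-2)}$, so the unique Killing horizon is non-degenerate and the generalized Kruskal--Szekeres extension of \cite{cedwolff} is available.

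Next I would confirm the (NEC) via Lemma \ref{lemma_NEC}(iii). Using $\Ric_{g_\mathcal{N}}(X,X)=n-2$ for any unit $X\in\Gamma(T\Sbb^{n-1})$ together with $h'=2m(n-2)r^{-(n-1)}$ and $h''=-2m(n-2)(n-1)r^{-n}$, a direct substitution shows that the expression in Lemma \ref{lemma_NEC}(iii) vanishes identically on all of $(0,\infty)$: the two $r^{-2}$ terms cancel, and the $r^{-n}$ terms collect to $m(n-2)r^{-n}\bigl(-(n-1)+(n-3)+2\bigr)=0$. This is exactly as expected, since Schwarzschild is Ricci-flat; in any case the (NEC) holds throughout the extension.

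The key step is to pin down $C_0$. For a hyperboloid we have $h_T=h+Cs^2$ with $C=\lambda_T^2\ge 0$ by Corollary \ref{kor_hyperboloids}, so that $h_T'(s)=2m(n-2)s^{-(n-1)}+2Cs>0$ for every $s>0$ and every $C\ge 0$, because both $m>0$ and $C\ge 0$ contribute positively. Hence $h_T$ is strictly increasing on $(0,\infty)$; since $h_T(s)\to-\infty$ as $s\to 0^+$ and $h_T(s)\to+\infty$ as $s\to\infty$ (or $\to 1$ when $C=0$), $h_T$ possesses exactly one positive zero $r_T$, at which necessarily $h_T'(r_T)>0$. As this holds for arbitrary $C\ge 0$, the supremum defining $C_0$ in the proof of Theorem \ref{thm_main2} equals $C_0=\infty$.

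Finally, since $\lambda_T^2=C<\infty=C_0$ holds trivially for every hyperboloid, Theorem \ref{thm_main2} applies verbatim and any orientable, closed, embedded STCMC surface $\Sigma\subset M_T$ is a slice $\{s\}\times\Sbb^{n-1}$, as claimed. I anticipate no genuine obstacle here: the whole corollary reduces to checking the two elementary facts above, and the only point deserving attention is the verification that $C_0=\infty$, which rests entirely on the monotonicity observation that $h_T'>0$ everywhere.
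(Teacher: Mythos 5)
Your proposal is correct and follows exactly the route the paper intends: the paper proves this corollary simply by remarking that all hypotheses of Theorem \ref{thm_main2} hold for Schwarzschild with $C_0=\infty$, and your write-up just supplies the (correct) elementary verifications — Ricci-flatness giving the (NEC), and $h_T'=2m(n-2)s^{-(n-1)}+2Cs>0$ forcing a unique simple zero of $h_T$ for every $C\ge 0$.
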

	\begin{bem}\label{bem_STCMCspheres}
		Note that a direct computation yields that
		\[
			\mathcal{H}^2=\frac{(n-1)h(s)}{s^2}
		\]
		for a leaf $\{s\}\times \mathcal{N}$ for any warped product graph $(M_T,g^T,K^T)$. As we have to extend any hyperboloid with $\lambda_T\not=0$ across the horizon where $h(r_H)=0$ into a region where $h<0$, both the case of generalized apparent horizons $\mathcal{H}^2=0$, and trapped STCMC surfaces with $\mathcal{H}^2<0$ naturally occur in hyperboloids.
	\end{bem}
\section{Characterizing photon surfaces with constant umbilicity factor}\label{sec_photonsurfaces}

	Arguing in analogue to Brendle \cite{brendle} that umbilic CMC surfaces are canonical leaves in the Riemannian setting under condition (H4), we obtain a short proof for a characterization of photon surfaces with constant umbilicity factor $\lambda$ imposing a condition on the eigenvalues of the Ricci tensor in the spacetime setting in Class $\mathfrak{H}$ (and the respective Kruskal--Szekeres extension). Although utilizing completely different methods, the statement is similar to a result by Cederbaum--Galloway {\cite[Theorem 3.8]{cedgal}} in spherical symmetry under the additional assumption that the umbilicity factor is constant.
\subsection{A spacetime Ricci eigenvalue argument}\,\newline
	Recall that a photon surface $\mathcal{P}^n$ is a smooth, timelike, totally umbilic hypersurface in an $(n+1)$-dimensional spacetime $(\mathfrak{M},\mathfrak{g})$, so denoting the induced metric on $\mathcal{P}^n$ by $\mathfrak{p}$ and its second fundamental form by $\mathfrak{h}$, we have 
	\[
	\mathfrak{h}=\lambda\mathfrak{p}
	\] 
	for some smooth function $\lambda$ on $\mathcal{P}^n$. The name is due to the fact that a timelike hypersurface is totally umbilic if and only if null geodesics starting tangent to the hypersurface must remain tangent to $\mathcal{P}^n$ \cite{claudelvirbhadraellis, perlick}. We refer the interested reader to \cite{cedgal} for a more complete introduction and list of references.\newline
	In this section, we will consider photon surfaces $\mathcal{P}^n$ in spacetimes of Class $\mathfrak{H}$ and their respective generalized Kruskal--Szekeres extension as defined in Subsection \ref{sec_extentsion}. For  photon surfaces, the Codazzi equation implies
	\begin{align}\label{eq_Codazzieq}
		(n-1)\nabla^n_V\lambda=-\mathfrak{Ric}(Y,\eta)
	\end{align} 
	for any tangent vector field $Y\in\Gamma(T\mathcal{P}^n)$, where $\nabla^n$ denotes the exterior derivative on $\mathcal{P}^n$ and $\eta$ the spacelike unit normal to $\mathcal{P}^n$, respectively. In spacetimes of Class $\mathfrak{H}$, the Ricci curvature tensor has an eigenvalue $\beta$ with a corresponding eigenspace that is at least $2$-dimensional and contains $\{\partial_t,\partial_r\}$. More precisely
	\begin{align*}
	\beta&=-\frac{1}{2}\left(h''+\frac{(n-1)}{r}h'\right).
	\end{align*}
	Since $\mathfrak{Ric}_{IJ}=\left(\Ric_{g_\mathcal{N}}\right)_{IJ}-\left((n-2)h+rh'\right)\left(g_{\mathcal{N}}\right)_{IJ}$, all other eigenvalues of $\mathfrak{Ric}$ are characterized by the eigenvalues of $\Ric_{g_\mathcal{N}}$. To ensure that the corresponding eigenspace of $\beta$ is exactly $2$-dimensional and thus spanned by $\{\partial_t,\partial_r\}$, we require that the difference between $\beta$ and any eigenvalue of $\mathfrak{Ric}\vert_{T\mathcal{N}\times T\mathcal{N}}$ is non-trivial. Equivalently, 
	\begin{align}\label{eq_eigenvalueTRUE2}
	\frac{1}{2}h''+\frac{(n-3)}{2r}h'-\frac{n-2}{r^2}h+r^{-2}\Ric_{g_\mathcal{N}}(X,X)\not= 0
	\end{align}
	for any unit tangent vector field $X$ in $(\mathcal{N},g_{\mathcal{N}})$\footnote{Is suffices to assume this for any unit tangent eigenvector $X$ of $\Ric_{g_\mathcal{N}}$}. Note that if \eqref{eq_eigenvalueTRUE2} is strictly positive, then $\beta$ is the smallest eigenvalue of $\mathfrak{Ric}$ and $(\mathfrak{M},\mathfrak{g})$ satisfies the (NEC) by Lemma \ref{lemma_NEC}. Conversely, the (NEC) implies that $\beta$ is the smallest eigenvalue of $\mathfrak{Ric}$. However, we need the additional assumption that the (NEC) is strict for any null vector field $L$ that is not perpendicular to $\mathcal{N}$ to conclude that \eqref{eq_eigenvalueTRUE2} is indeed satisfied. Recall from Remark \ref{bem_NEC} that in spherical symmetry equality implies $h=1+\frac{C_1}{r^{n-2}}+C_2r^2$, so \eqref{eq_eigenvalueTRUE2} in Class $\mathfrak{H}$ is generally satisfied in spherical symmetry outside of a dense subset unless $h$ locally corresponds to the Schwarzschild de\,Sitter/Schwarzschild anti de\,Sitter family. 
	All of the above observations naturally extend to the generalized Kruskal--Szekeres extension, and in this case \eqref{eq_eigenvalueTRUE2} on $(0,\infty)$ is equivalent to the fact, that the corresponding eigenspace of $\beta$ is exactly $2$-dimensional and spanned by $\{\partial_u,\partial_v\}$.\newline
	Using the Codazzi equation as the main tool we now prove a characterization of photon surfaces with constant umbilicity factor in a spacetime of Class $\mathfrak{H}$ or its generalized Kruskal--Szekeres extension satisfying \eqref{eq_eigenvalueTRUE2}.
	\begin{thm}\label{thm_charakterisationphotonsurfaces}
		Let $\mathcal{P}^n$ be a connected photon surface with constant umbilicity factor $\lambda$ in a spacetime of Class $\mathfrak{H}$ or its respective generalized Kruskal--Szekeres extension. Assume further that \eqref{eq_eigenvalueTRUE2} is satisfied on a dense set of radii in $(0,\infty)$. Then $\mathcal{P}^n$ is either symmetric or $\mathcal{P}^n$ is totally geodesic with parallel unit normal vector $\eta$ everywhere tangent to $\mathcal{N}$.
	\end{thm}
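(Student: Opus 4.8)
\emph{The plan.} The whole argument is driven by the Codazzi identity \eqref{eq_Codazzieq} together with the eigenstructure of $\mathfrak{Ric}$ recorded above. Since $\lambda$ is constant, the left-hand side of \eqref{eq_Codazzieq} vanishes, so $\mathfrak{Ric}(Y,\eta)=0$ for every $Y\in\Gamma(T\mathcal{P}^n)$. Because $T\mathcal{P}^n=\eta^{\perp}$, the one-form $\mathfrak{Ric}(\eta,\cdot)$ then annihilates $\eta^{\perp}$ and is hence proportional to $\mathfrak{g}(\eta,\cdot)$; equivalently $\eta$ is, at each point, an eigenvector of the Ricci endomorphism. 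The curvature formulas show that this endomorphism is block diagonal with respect to the splitting $T\mathfrak{M}=\Pi\oplus T\mathcal{N}$, where $\Pi\definedas\operatorname{span}\{\partial_t,\partial_r\}$ (respectively $\operatorname{span}\{\partial_u,\partial_v\}$ in the Kruskal--Szekeres extension) is the $\beta$-eigenplane. Writing $\eta=\eta^{\Pi}+\eta^{\mathcal{N}}$, the eigenvector equation forces $\beta\,\eta^{\Pi}=c\,\eta^{\Pi}$ and $\mathfrak{Ric}(\eta^{\mathcal{N}})=c\,\eta^{\mathcal{N}}$ for the common eigenvalue $c$; at any radius where \eqref{eq_eigenvalueTRUE2} holds, $\beta$ is not an eigenvalue of $\mathfrak{Ric}\vert_{T\mathcal{N}\times T\mathcal{N}}$, so one of $\eta^{\Pi}$, $\eta^{\mathcal{N}}$ must vanish. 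Thus at every such radius either $\eta\in\Pi$ or $\eta\in T\mathcal{N}$.

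\emph{Globalizing the dichotomy.} Next I would promote this pointwise alternative to a global one on the connected surface $\mathcal{P}^n$ by studying the continuous function $\varphi\definedas\mathfrak{g}(\eta^{\mathcal{N}},\eta^{\mathcal{N}})\in[0,1]$. At every point with admissible radius $\varphi\in\{0,1\}$ by the previous step. On the (open) set where the radius is inadmissible, I claim $r$ is locally constant: otherwise $dr\neq0$ somewhere there, making $r$ a submersion whose image is an open interval meeting the dense admissible set, a contradiction. But where $r$ is locally constant $\mathcal{P}^n$ is an open piece of a slice $\{r=r_0\}$, whose unit normal is radial, so $\eta\in\Pi$ and again $\varphi=0$. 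Hence $\varphi\in\{0,1\}$ on all of $\mathcal{P}^n$, and continuity together with connectedness yields $\varphi\equiv0$ or $\varphi\equiv1$.

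\emph{The two cases.} If $\varphi\equiv0$ then $\eta\in\Pi$ everywhere, so the integrable fibre distribution $T\mathcal{N}\subseteq\eta^{\perp}=T\mathcal{P}^n$ is tangent to $\mathcal{P}^n$; its leaves are the fibres $\{\mathrm{pt}\}\times\mathcal{N}$, so $\mathcal{P}^n$ is a union of fibres $\gamma\times\mathcal{N}$ and therefore rotationally symmetric. If instead $\varphi\equiv1$ then $\eta\in T\mathcal{N}$ everywhere, so $\Pi\subseteq\eta^{\perp}=T\mathcal{P}^n$. Since the base leaves $B\times\{q\}$ of the warped product $\mathfrak{g}=\mathfrak{g}_B+r^2g_{\mathcal{N}}$, with $B$ the $(t,r)$-plane, are totally geodesic in $(\mathfrak{M},\mathfrak{g})$, we get $\overline{\nabla}_VW\in\Pi\perp\eta$ for all $V,W\in\Pi$, hence $\mathfrak{h}\vert_{\Pi\times\Pi}=0$; as $\mathfrak{h}=\lambda\mathfrak{p}$ and $\mathfrak{g}\vert_{\Pi}$ is non-degenerate (indeed $\mathfrak{h}(\partial_u,\partial_v)=\lambda F$ with $F\neq0$ in the extension), this forces $\lambda\equiv0$. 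Then $\mathcal{P}^n$ is totally geodesic, its Weingarten map vanishes, and therefore $\overline{\nabla}_Y\eta=0$ for all $Y\in\Gamma(T\mathcal{P}^n)$, i.e.\ $\eta$ is a parallel unit normal everywhere tangent to $\mathcal{N}$, which is the second alternative.

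\emph{Main obstacle.} I expect the delicate point to be the globalization of the second paragraph, rather than the two short warped-product computations. The hypothesis \eqref{eq_eigenvalueTRUE2} is imposed only on a \emph{dense} set of radii, so the clean $\beta$-eigenplane may degenerate along a closed exceptional set; ruling out that the continuous normal $\eta$ interpolates between the two eigenbundles across such radii, and checking that the whole argument carries over across the non-degenerate Killing horizons --- where the $(t,r)$-chart breaks down and one must work in the $(u,v)$-coordinates with $\mathfrak{g}(\partial_u,\partial_v)=F\neq0$ in place of $\mathfrak{g}(\partial_t,\partial_t)=-h$ --- is where the care is needed.
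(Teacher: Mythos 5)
Your proposal is correct and follows essentially the same route as the paper: the Codazzi equation with $\lambda=\operatorname{const.}$ forces $\eta$ to be a Ricci eigenvector, condition \eqref{eq_eigenvalueTRUE2} and the block structure of $\mathfrak{Ric}$ give the pointwise dichotomy $\eta\in\operatorname{span}\{\partial_t,\partial_r\}$ (resp. $\{\partial_u,\partial_v\}$) or $\eta\in T\mathcal{N}$, and connectedness plus continuity globalizes it. Your two local variations are fine and arguably slightly cleaner --- packaging the globalization via the continuous function $\varphi$ (note only that the inadmissible-radius set is closed, not open, and that a priori $\varphi$ need not lie in $[0,1]$ since $\mathfrak{g}\vert_\Pi$ is Lorentzian; neither affects the argument), and deriving $\lambda=0$ from total geodesy of the base leaves rather than from staticity of $\partial_t$, which avoids the paper's need to argue separately at horizon crossings where $h=0$.
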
\newpage
	\begin{bem}
		In this context, we understand a photon surface to be symmetric in a spacetime of Class $\mathfrak{H}$ or its generalized Kruskal--Szekeres extension if for every point $p\in\mathcal{P}^n$ the canonical lift of the tangent space of $\mathcal{N}$ at $p$ is a subspace of the tangent space $T_p\mathcal{P}^n$ of $\mathcal{P}^n$, and the unit normal $\eta$ is everywhere spanned by $\{\partial_t,\partial_r\}$ or $\{\partial_u,\partial_v\}$ respectively. This notion agrees with the definition of symmetric photon surfaces in \cite{cedwolff}, and Cederbaum--Galloway \cite{cedgal}, Cederbaum--Jahns--Vi\v{c}\'{a}nek Mart\'{i}nez \cite{cedoliviasophia} in spherical symmetry via a profile curve $\gamma$ mapping into the $(t,r)$- or $(u,v)$-coordinate plane, respectively.
		
		Assuming to be in the exact setting of Cederbaum--Galloway in \cite{cedgal}, we note that a non-empty intersection between a $\{t=\operatorname{const.}\}$ slice and a totally geodesic photon surface with parallel unit normal vector $\eta$ everywhere tangent to $\mathcal{N}$, is (a piece of) a centered hyperplane in isotropic coordinates as defined in \cite{cedgal}. In particular, Theorem \ref{thm_charakterisationphotonsurfaces} draws the same conclusion in this setting as the result by Cederbaum--Galloway \cite[Theorem 3.8]{cedgal} under the additional assumption of $\lambda=\operatorname{const.}$. We refer to the next subsection for a comparison of the two statements.
	\end{bem}
	\begin{proof}
		Let $\mathcal{P}^n$ be a photon surface with constant umbilicity factor $\lambda$. Then the Codazzi equation implies 
		\[
		\mathfrak{Ric}(Y,\eta)=-(n-1)\nabla_V\lambda=0,
		\]
		for any tangent vector $Y$. Therefore $\eta$ is an eigenvector of $\mathfrak{Ric}$ along $\mathcal{P}^n$. Since \eqref{eq_eigenvalueTRUE2} is satisfied on a dense set of radii in $(0,\infty)$, continuity yields that \eqref{eq_eigenvalueTRUE2} is in fact satisfied on $(0,\infty)\setminus\mathcal{X}$, where $\mathcal{X}$ is a closed set that has measure zero. By assumption the eigenspace of the eigenvalue $\beta$ of $\mathfrak{Ric}$ is spanned by $\{\partial_t,\partial_r\}$ ($\{\partial_u,\partial_v\})$ away from $\mathcal{X}$. If $\mathcal{P}^n\subset \R\times \mathcal{X}\times \mathcal{N}$, as $\mathcal{P}^n$ is connected, $\mathcal{P}^n\subset\R\times \{r=r_0\}\times \mathcal{N}$ for some $r_0\in \mathcal{X}$, in particular $\mathcal{P}^n$ is rotationally symmetric.\newline
		Now assume $\mathcal{P}^n\subset\R\times I\times \mathcal{N}$ for some open interval $I$ with $I\cap \mathcal{X}=\emptyset$. Then \eqref{eq_eigenvalueTRUE2} holds on $I$, so we have $\eta\in\operatorname{span}(\partial_t,\partial_r)$ (or $\eta\in\operatorname{span}(\partial_u,\partial_v)$ in the case of the generalized Kruskal--Szekeres extension) or $\eta$ is everywhere tangent to $\mathcal{N}$. In the first case $\eta$ is everywhere perpendicular to $\mathcal{N}$ and hence everywhere a lift of $T\mathcal{N}$ has to be a subspace of the tangent bundle of $\mathcal{P}^n$, i.e., $\mathcal{P}^n$ is rotationally symmetric. If $\eta$ is everywhere tangent to $\mathcal{N}$, then away from any Killing horizon (which $\mathcal{P}^n$ can only cross at a discrete set which we can also exclude from $I$) $h\not=0$, and $\partial_t$ is well-defined and tangent to $\mathcal{P}^n$. Staticity then implies
		\[
		-\lambda h=\lambda\mathfrak{p}(\partial_t,\partial_t)=\mathfrak{h}(\partial_t,\partial_t)=-g(\nabla_{\partial_t}\partial_t,\eta)=0,
		\]
		so $\lambda=0$. Hence $\mathfrak{h}$ vanishes identically and $\mathcal{P}^n$ is totally geodesic. In particular, $\eta$ is parallel along $\mathcal{P}^n$.
		
		Lastly, if $I\cap \mathcal{X}\not=\emptyset$, then the above argument holds on any connected component of $\mathcal{P}^n\cap (\R\times(I\setminus \mathcal{X})\times\mathcal{N})$, i.e., on any connected component $\eta$ is either orthogonal or tangential to $\mathcal{N}$. Assume that there is an open component $C$ such that $\eta$ is everywhere tangent to $\mathcal{N}$. In particular, $\overline{C}\subset \mathcal{P}^n$ intersects $\R\times \mathcal{X}\times \mathcal{N}$ transversally. As $\eta$ is continuous and nowhere-vanishing, this implies that $\eta$ is tangent to $\mathcal{N}$ for all open connected components of $\mathcal{P}^n\cap (\R\times(I\setminus \mathcal{X})\times\mathcal{N})$ with closure in $\mathcal{P}^n$ intersecting $\partial C$. As $\mathcal{P}^n$ is connected, applying the above argument iteratively eventually covers all of $\mathcal{P}^n$. This concludes the proof.
	\end{proof}
\subsection{Comparison to the characterization of Cederbaum--Galloway}\,\newline
	Cederbaum--Galloway characterized photon surfaces in static, spherically symmetric spacetimes in isotropic coordinates that are of the form $(\R\times D^n,-\widetilde{N}^2\d t^2+\Psi^2\delta)$, where\linebreak ${D^n\definedas\{x\in R^n\colon \norm{x}=s\in I\}}$ for some positive open Interval $I\in\R$, and $\Psi,\widetilde{N}$ are smooth, positive functions on $I$. In their assumption, they exclude spacetimes satisfying 
	\begin{align}\label{eq_spacetimeconformalyflat1}
	\frac{\widetilde{N}'}{\widetilde{N}}=\frac{\Psi'}{\Psi},
	\end{align}
	as this implies that these spacetimes are spacetime conformally flat
	\[
	-\widetilde{N}^2\d t^2+\Psi^2\delta=\Psi^2\left(-A\d t^2+\delta\right)
	\]
	for some positive constant $A$, and thus posses the same plethora of ``off-center'' photon surfaces as the Minkowski spacetime. Assuming that \eqref{eq_spacetimeconformalyflat1} does not hold, Cederbaum--Galloway showed in \cite{cedgal}, that any photon surface must necessarily be rotationally symmetric or a centered vertical hyperplane in this coordinate system. See \cite[Theorem 3.8]{cedgal} for a precise statement. A spacetime of class $\mathcal{S}$ can always be locally rewritten in isotropic coordinates by defining $s$ as a primitive of $\left(r\sqrt{h(r)}\right)^{-1}$, and setting $\Psi(s)\definedas\frac{r(s)}{s}$ and $\widetilde{N}^2(s)=h(r(s))$, where we denote the inverse of $s(r)$ by $r(s)$. On the other hand, a static, spherically symmetric spacetime in isotropic coordinates can be globally rewritten as a spacetime of class $\SCal$ if and only if 
	\begin{align}\label{eq_spacetimeconformalyflat2}
	\widetilde{N}^2=\left(1+\frac{s\Psi'}{\Psi}\right)^2,
	\end{align}
	with $r\definedas s\Psi$ and $h(r)=\widetilde{N}^2(s(r))$, where $s(r)$ denotes the inverse of $r(s)$. Thus \cite[Theorem 3.8]{cedgal} and Theorem \ref{thm_charakterisationphotonsurfaces} achieve a similar characterization of photon surfaces under different assumptions. Recall that Theorem \ref{thm_charakterisationphotonsurfaces} additionally imposes that $\lambda$ is constant, however the proof of Cederbaum--Galloway heavily relies on the conformally Euclidean structure of the time--symmetric time slices in class $\SCal$ and that any timelike hypersurface intersects them transversally. Thus, it is not immediate to extend this result to cases $\mathcal{N}\not=\mathbb{S}^{n-1}$ or past the Killing horizon in the generalized Kruskal--Szekeres extension as the totally geodesic $\{t=\operatorname{const.}\}$-slices are timelike in a region with $h<0$, and hence are photon surfaces themselves.
	
	However, even within class $\SCal$ the assumptions of both theorems appear to be distinctly different. On the one hand, the Schwarzschild spacetime satisfies the assumptions of Cederbaum--Galloway, but can not satisfy \eqref{eq_eigenvalueTRUE2} as a vacuum solution. Conversely, the following example shows that there is a $1$-parameter family of spacetimes within class $\mathcal{S}$ that satisfy \eqref{eq_spacetimeconformalyflat1} locally, but globally allow for a characterization of photon surfaces with constant umbilicity factor using Theorem \ref{thm_charakterisationphotonsurfaces}.
	
	Combining \eqref{eq_spacetimeconformalyflat1} and \eqref{eq_spacetimeconformalyflat2}, we see that this question reduces to an ODE on $\Psi$, which we can explicitly solve for. The solutions belong to a $2$-parameter family and are given by
	\[
	\Psi(s)=\frac{1}{Cs+C_0}
	\]
	for some constants $C$, $C_0$, where the open interval $I$ is chosen such that $\Psi$ is well-defined and positive on $I$. We find $\widetilde{N}^2=C_0^2\Psi^2$, so we need to impose $C_0\not=0$.
	Changing coordinates as above, using $h(r)=\widetilde{N}^2$ and $r=s\Psi$, we see that the corresponding metric coefficient $h$ satisfies $h(r)=(1-Cr)^2$ indepentent of the choice of $C_0\not=0$. For $C=0$ we recover the Minkowski spacetime. Explicit computation gives
	\[
	\frac{1}{2}h''+\frac{(n-3)}{2r}h'-\frac{(n-2)}{r^2}(h-1)=\frac{n-1}{r}C,
	\]
	so Theorem \ref{thm_charakterisationphotonsurfaces} applies for $C\not=0$ ($n\ge 2$). Thus all photon surfaces with constant umbilicity factor $\lambda$ must necessarily be centered around the origin. However, in contrast to the result of Cederbaum--Galloway, this illustrates that assumption \eqref{eq_eigenvalueTRUE2} does not prevent the formation of ``off-center'' photon surfaces in general.
	If $C>0$ the corresponding spacetime of class $\SCal$ with profile $h$ satisfies the (NEC) and the (DEC) (with negative cosmological constant $\Lambda=-\frac{n(n-1)}{2}C^2$), and the spacetime possesses a degenerate Killing horizon at $r=\frac{1}{C}$. As $s\Psi:(0,\infty)\to(0,\frac{1}{C})$, only the interior of the degenerate Killing horizon is spacetime conformally flat, while the exterior is not. These spacetimes are therefore an interesting toy-model to compare the result of Cederbaum--Galloway and Theorem \ref{thm_charakterisationphotonsurfaces}, as there are a wide array of ``off-center'' photon surfaces in the interior of the degenerate horizon (which have to be conformal images of either pseudospheres or hyperplanes), while in the exterior any photon surface has to be centered in the sense of Theorem 3.8 in \cite{cedgal}. Althewhile, Theorem \ref{thm_charakterisationphotonsurfaces} implies that any photon sphere with constant umbilicity factor is necessarily centered around the origin. In fact, the centered pseudospheres $\{s=\sqrt{R^2+t^2}\}$ of radius $R$ in isotropic coordinates in the interior of the degenerate horizon retain $\lambda=\frac{1}{R}$.
	\vspace{4cm}
\begin{appendix}
	\section{Curvature identities}\label{appendix_curvature}
	\begin{lem}\label{lem_intrinsiccurvature1}
		For any warped product graph $(M_T,g^T)$, we have that
		\begin{align*}
		\Ric^T_{ss}&=-\frac{(n-1)}{2s}\frac{h_T'}{h_T},\\
		\Ric^T_{IJ}&=\left(\Ric_{g_\mathcal{N}}\right)_{IJ}-\left((n-2)h_T+\frac{1}{2}sh_T'\right)\left(g_{\mathcal{N}}\right)_{IJ},\\
		\scal^T&=s^{-2}\scal_{g_\mathcal{N}}-\frac{(n-1)}{s^2}\left((n-2)(h_T)+sh_T'\right),\\
		\frac{\Hess_Tf_T}{f_T}&=\frac{1}{2}\frac{h_T''}{h_T}\d s^2+\frac{1}{2}rh_T'g_{\mathcal{N}},\\
		\frac{\Delta_Tf_T}{f_T}&=\frac{1}{2}h_T''+\frac{(n-1)}{2s}h_T',
		\end{align*}
		where $\Ric^T$, $R^T$, $\Hess_T$, and $\Delta_T$ denote the Ricci curvature, scalar curvature, Hessian and Laplacian along $(M_T, g^T)$ respectively, $\Ric_{g_\mathcal{N}}$, and $\scal_{g_\mathcal{N}}$ denote the Ricci curvature and scalar curvature of $(\mathcal{N},g_\mathcal{N})$, respectively, and $f_T$ is defined as $f_T\definedas \sqrt{h_T}$ on $I$.
		Furthermore, for a spacetime $(\mathfrak{M},\mathfrak{g})$ of Class $\mathfrak{H}$, we have
		\begin{align*}
		\mathfrak{Rm}(\cdot,\partial_t,\cdot,\partial_t)&=f_0\Hess_0f_0(\cdot,\cdot)\\
		\mathfrak{Rm}(\cdot,\partial_r,\cdot,\partial_r)&=f_0\Hess_0f_0(\partial_r,\partial_r)\d t^2-f^{-3}\Hess_0f_{0}(\partial_I,\partial_J)\d x^I\d x^J\\
		\mathfrak{Rm}(\cdot,\partial_r,\cdot,\partial_t)&=-\frac{1}{2}h''\d t\d r\\
		\mathfrak{Rm}(\cdot,\partial_t,\cdot,\partial_r)&=-\frac{1}{2}h''\d r\d t\\
		\mathfrak{R}&=-h''-\frac{(n-1)}{r^2}\left((n-2)h+2rh'\right)+r^{-2}\scal_{g_\mathcal{N}}.
		\end{align*}
	\end{lem}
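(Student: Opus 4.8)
The plan is to treat both families of metrics as warped products and read off every identity from the Christoffel symbols, since all the claims are purely local curvature computations with no global input. For the Riemannian part I would view $(M_T,g^T)$ with $g^T=\frac{1}{h_T}\d s^2+s^2g_{\mathcal{N}}$ as a warped product over the one-dimensional base $\left((r_1,r_2),\frac{1}{h_T}\d s^2\right)$ with fibre $(\mathcal{N},g_{\mathcal{N}})$ and warping function $\varphi(s)=s$. The only base Christoffel symbol is $\Gamma^s_{ss}=-\frac{h_T'}{2h_T}$, from which one gets $|\nabla^B s|^2=h_T$, $\Hess^B s(\partial_s,\partial_s)=\frac{h_T'}{2h_T}$, and $\Delta^B s=\frac{h_T'}{2}$. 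Plugging these into the standard warped-product Ricci formulas (equivalently, computing the Riemann tensor directly from the Christoffel symbols) yields $\Ric^T_{ss}$ and $\Ric^T_{IJ}$ at once, since the base Ricci vanishes in dimension one and the mixed terms vanish; tracing against $g^T$ using $g^{ss}=h_T$, $g^{IJ}=s^{-2}(g_{\mathcal{N}})^{IJ}$, and $(g_{\mathcal{N}})^{IJ}(g_{\mathcal{N}})_{IJ}=n-1$ then gives $\scal^T$.

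For the Hessian and Laplacian of $f_T=\sqrt{h_T}$ I would simply differentiate. Using $\partial_s f_T=\frac{h_T'}{2\sqrt{h_T}}$ together with $\Gamma^s_{ss}=-\frac{h_T'}{2h_T}$ the first-derivative terms cancel and leave $\Hess_Tf_T(\partial_s,\partial_s)=\frac{h_T''}{2\sqrt{h_T}}$, while the extra Christoffel symbol $\Gamma^s_{IJ}=-h_T s\,(g_{\mathcal{N}})_{IJ}$ produces $\Hess_Tf_T(\partial_I,\partial_J)=\frac{s h_T'\sqrt{h_T}}{2}(g_{\mathcal{N}})_{IJ}$. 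Dividing by $f_T$ gives the stated $\frac{\Hess_Tf_T}{f_T}$, and tracing against $g^T$ gives $\frac{\Delta_Tf_T}{f_T}$.

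For the spacetime part I would exploit the static structure of $\mathfrak{g}=-h\d t^2+\frac{1}{h}\d r^2+r^2g_{\mathcal{N}}$. The relevant Christoffel symbols are $\Gamma^r_{tt}=\frac{hh'}{2}$, $\Gamma^t_{tr}=\frac{h'}{2h}$, $\Gamma^r_{rr}=-\frac{h'}{2h}$, $\Gamma^r_{IJ}=-hr(g_{\mathcal{N}})_{IJ}$, $\Gamma^I_{rJ}=\frac{1}{r}\delta^I_J$, and a direct evaluation of $R(\partial_\mu,\partial_t)\partial_t$ and $R(\partial_\mu,\partial_r)\partial_\nu$ in the paper's convention $\Rm(X,Y,W,Z)=\langle R(X,Y)Z,W\rangle$ produces each $\mathfrak{Rm}$-component. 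The cleanest bookkeeping is to observe that $\partial_t$ is the static Killing field with $\mathfrak{g}(\partial_t,\partial_t)=-h=-f_0^2$, so the identity $\mathfrak{Rm}(\cdot,\partial_t,\cdot,\partial_t)=f_0\Hess_0f_0(\cdot,\cdot)$ is exactly the standard lapse-Hessian relation for static spacetimes and coincides with the $h_T=h$ case of the Riemannian computation above. The remaining $\partial_r$-components and the mixed $tr$-term follow from the same Christoffel data.

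Finally I would obtain the scalar curvature $\mathfrak{R}$ by contraction rather than from scratch: the Ricci tensor of Class $\mathfrak{H}$ has $\{\partial_t,\partial_r\}$ as a $\beta$-eigenspace with $\beta=-\frac{1}{2}(h''+\frac{n-1}{r}h')$ and $\mathfrak{Ric}_{IJ}=(\Ric_{g_{\mathcal{N}}})_{IJ}-((n-2)h+rh')(g_{\mathcal{N}})_{IJ}$, so $\mathfrak{R}=2\beta+r^{-2}\scal_{g_{\mathcal{N}}}-(n-1)r^{-2}((n-2)h+rh')$, which simplifies directly to the claimed expression. I do not expect a genuine obstacle, as every identity is a finite local computation. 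The only thing requiring care is consistent sign and index bookkeeping---in particular matching the paper's curvature convention, tracking the warping factor $r^2$ when raising fibre indices, and correctly symmetrizing the off-diagonal $\mathfrak{Rm}(\cdot,\partial_r,\cdot,\partial_t)$ so that its single nonzero $tr$-entry equals $-\frac{1}{2}h''$.
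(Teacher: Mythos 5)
Your proposal is correct and follows essentially the same route as the paper: the paper likewise lists the nontrivial Christoffel symbols of $g^T$ (which match yours), reads off $\Ric^T$, $\scal^T$, $\Hess_Tf_T$, and $\Delta_Tf_T$ from the coordinate formulas, and obtains the spacetime quantities by the analogous direct computation together with O'Neill's formula for $\mathfrak{Ric}$ and a metric trace for $\mathfrak{R}$. Packaging the Ricci computation via the standard warped-product formulas rather than the raw $\Gamma$-contraction is only a cosmetic difference.
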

	\begin{proof}
		Assuming the warped product structure, all non-trivial Christoffel symbols are given by 
		\begin{align*}
		\leftidx{^T}\!{\Gamma}_{ss}^s&=-\frac{1}{2}\frac{h_T'}{h_T},\\
		\leftidx{^T}\!{\Gamma}_{Is}^K&=\frac{1}{s}\delta^K_I,\\
		\leftidx{^T}\!{\Gamma}_{IJ}^s&=-sh_T\left(g_{\mathcal{N}}\right)_{IJ},\\
		\leftidx{^T}\!{\Gamma}_{IJ}^K&=\leftidx{^\mathcal{N}}\!{\Gamma}_{IJ}^K,
		\end{align*}
		where $\leftidx{^\mathcal{N}}\!{\Gamma}_{IJ}^K$ denotes Christoffel symbols of $(\mathcal{N},g_{\mathcal{N}})$. By definition, the Ricci curvature components are given by the formula
		\[
		\Ric^T_{ij}=\leftidx{^T}\!{\Gamma}_{ij,k}^k-\leftidx{^T}\!{\Gamma}_{ki,j}^k+\leftidx{^T}\!{\Gamma}_{ij}^k\leftidx{^T}\!{\Gamma}_{kl}^l-\leftidx{^T}\!{\Gamma}_{ik}^l\leftidx{^T}\!{\Gamma}_{jl}^k,
		\]
		where $i,j$ denote the coordinates $\{s,x^I\}$ on $M_T$. Then, a straightforward computation yields the above identities for the Ricci curvature components. Taking the metric trace with respect to $g^T$ yields the scalar curvature
		\[	\scal^T=s^{-2}\scal_{g_\mathcal{N}}-\frac{(n-1)}{s^2}\left((n-2)(h_T)+sh_T'\right).		\]
		The Hessian $\Hess_T$ of $f_T$ on $M_T$ is given by
		\[
		\left(\Hess_Tf_{T}\right)_{ij}=f_{T,ij}-\leftidx{^T}\!{\Gamma}_{ij}^kf_{T,k}
		\]
		where $f_{T,i}\definedas \partial_if_T$. Using the above identities for the Christoffel symbols and taking the metric trace with respect to $g^T$, the identities for the Hessian and Laplacian of $f_T$ are immediate.\newline
		Computing all non-trivial Christoffel symbols on $(\mathfrak{M},\mathfrak{g})$ yields the identities for the relevant curvature components of $\mathfrak{Rm}$ in a similarly straightforward way. Having computed the Ricci curvature on $M_0$ by putting $h_T=h$, we get the expressions for $\mathfrak{Ric}$ using O'Neill's formula (Proposition 2.7 \cite{corvino}). Taking the metric trace with respect to $\mathfrak{g}$ yields the explicit formula for the scalar curvature $\mathfrak{R}$.
	\end{proof}
	We complete the proof of Lemma \ref{lem_constraintsWarpedproductgraphs} with the following two lemmas. Recall that for a tangent vector $V_T=c_1f_T\partial_s+\frac{c_2}{s}X$, we define $V_0:=c_1f\partial_r+\frac{c_s}{s}X$ tangent to $M_0$.
	\begin{lem}\label{lem_appendix1}
		For a warped product graph $(M_T,g^T,K^T)$ we have
		\begin{align*}
		\mu_T=\mu_0=\frac{1}{2}\scal_0&=\frac{1}{2}\mathfrak{R}+\frac{\Delta_0f_0}{f_0},\\
		\Ric^T(V_T,V_T)+(\tr_TK^TK^T-(K^T)^2)(V_T,V_T)&=\Ric^0(V_0,V_0).
		\end{align*}
	\end{lem}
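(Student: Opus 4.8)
The plan is to verify both identities by direct symbolic computation, feeding the curvature formulas collected in Lemma \ref{lem_intrinsiccurvature1} together with the extrinsic quantities \eqref{eq_K_T_1}--\eqref{eq_K_T_2} into the Gauss--Codazzi constraints, and then invoking the algebraic relations of Lemma \ref{lemma_difference} to watch the entire $h_T$-dependence cancel against the second fundamental form contributions, leaving behind only the intrinsic geometry of $M_0$. The conceptual reason the cancellation must occur is that the ambient spacetime depends only on $h$, so the Gauss--Codazzi re-expression of its fixed curvature cannot detect the choice of slice, exactly as noted in the Remark preceding the proof of Lemma \ref{lem_constraintsWarpedproductgraphs}.

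For the first identity I would start from the Hamiltonian constraint $2\mu_T=\scal^T-\btr{K^T}^2+(\tr_TK^T)^2$. Using \eqref{eq_K_T_1} the combination $(\tr_TK^T)^2-\btr{K^T}^2$ equals $2(n-1)h_Ta_Tb_T+(n-1)(n-2)b_T^2$, and Lemma \ref{lemma_difference} rewrites this as $\frac{(n-1)(h_T'-h')}{s}+\frac{(n-1)(n-2)(h_T-h)}{s^2}$. Adding the expression for $\scal^T$ from Lemma \ref{lem_intrinsiccurvature1}, every term carrying $h_T$ cancels and one is left precisely with $\scal^T$ evaluated at $h_T=h$, namely $\scal_0$; hence $2\mu_T=\scal_0=2\mu_0$, the last equality being immediate since $K_0\equiv0$. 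The final link $\frac{1}{2}\scal_0=\frac{1}{2}\mathfrak{R}+\frac{\Delta_0f_0}{f_0}$ is then a one-line substitution of the formulas for $\mathfrak{R}$ and $\Delta_0f_0/f_0$, both read off from Lemma \ref{lem_intrinsiccurvature1} with $h_T=h$.

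For the second identity I would evaluate the quadratic form on the frame vector $V_T=c_1f_T\partial_s+\frac{c_2}{s}X$ and match coefficients of $c_1^2$ and $c_2^2$; since the identity is bilinear and the isomorphism $V_T\mapsto V_0$ is linear, this suffices and the computation never needs $\btr{V_T}=1$. The block-diagonal structure of both $\Ric^T$ (Lemma \ref{lem_intrinsiccurvature1}) and \eqref{eq_K_T_2} annihilates all cross terms, so the $ss$- and $XX$-components may be treated separately. Using $f_T^2=h_T$ to pair $\Ric^T_{ss}$ with $c_1f_T\partial_s$, together with \eqref{eq_K_T_2} and Lemma \ref{lemma_difference}, the $c_1^2$-coefficient collapses from $-\frac{(n-1)h_T'}{2s}+\frac{(n-1)(h_T'-h')}{2s}$ down to $-\frac{(n-1)h'}{2s}$, and the $c_2^2$-coefficient likewise sheds all its $h_T$-terms, leaving $\frac{\Ric_{g_\mathcal{N}}(X,X)}{s^2}-\frac{(n-2)h}{s^2}-\frac{h'}{2s}$. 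These are exactly the $c_1^2$- and $c_2^2$-coefficients produced by expanding $\Ric^0(V_0,V_0)$ with $V_0=c_1f\partial_r+\frac{c_2}{s}X$, which closes the argument.

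There is no genuine obstacle here, as every step is a finite calculation, so the real work is bookkeeping. The one point demanding care is keeping the two normalizations $f_T^2=h_T$ and $f_0^2=h$ straight when the Ricci and Hessian tensors are applied to the rescaled radial vectors: it is precisely the mismatch between $f_T^2\Ric^T_{ss}$ and $f_0^2\Ric^0_{rr}$ that is compensated by the extrinsic term $\tr_TK^TK^T-(K^T)^2$, and tracking this compensation term by term is what makes the cancellation transparent.
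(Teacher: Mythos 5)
Your proposal is correct and follows essentially the same route as the paper: the paper's proof simply states that the identities follow from the Hamiltonian constraint together with Equations \eqref{eq_K_T_1}--\eqref{eq_K_T_2}, Lemma \ref{lemma_difference}, and Lemma \ref{lem_intrinsiccurvature1}, which is precisely the computation you carry out explicitly (and your intermediate expressions, e.g.\ $(\tr_TK^T)^2-\btr{K^T}^2=2(n-1)h_Ta_Tb_T+(n-1)(n-2)b_T^2$ and the surviving coefficients $-\frac{(n-1)h'}{2s}$ and $\frac{\Ric_{g_\mathcal{N}}(X,X)}{s^2}-\frac{(n-2)h}{s^2}-\frac{h'}{2s}$, check out).
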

	\begin{proof}
		For the first identity, it suffices to show $\mu_T=\frac{1}{2}R_0$. The rest is immediate or follows from Lemma \ref{lem_intrinsiccurvature1}. Since
		\[
			2\mu_T=R_T-\vert{K^T}\vert^2+(\tr_TK^T)^2
		\]
		the claims follow from Equations \eqref{eq_K_T_1}, \eqref{eq_K_T_2}, Lemma \ref{lemma_difference} and Lemma \ref{lem_intrinsiccurvature1}. The identity for $\Ric^T$ follows directly from Lemma \ref{lem_intrinsiccurvature1} and Lemma \ref{lemma_difference}.
	\end{proof}
	\begin{lem}\label{lem_riemannhessian}
		\begin{align*}
		\mathfrak{Rm}(V,\vec{n}^T,V,\vec{n}^T)=\frac{\Hess_0f(V_0,V_0)}{f}.
		\end{align*}
	\end{lem}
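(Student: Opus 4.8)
The plan is to prove the identity by a direct computation in the coordinate basis $\{\partial_t,\partial_r,\partial_I\}$, using the explicit spacetime curvature components recorded in Lemma \ref{lem_intrinsiccurvature1} together with the relation between $T'$ and $h_T$ from Remark \ref{rem_destriptionbydifference}. First I would rewrite both $V_T$ and $\vec{n}^T$ in this basis. Since $\partial_s=\partial_r+T'\partial_t$, the tangent vector $V_T=c_1f_T\partial_s+\frac{c_2}{s}X$ becomes $V_T=c_1f_T\partial_r+c_1f_TT'\partial_t+\frac{c_2}{s}X$. For the normal I would use $\nabla_0T=hT'\partial_r$ together with the crucial simplification $1-h\btr{\nabla_0T}^2=1-h^2(T')^2=\frac{h}{h_T}$, which follows at once from Remark \ref{rem_destriptionbydifference}; this collapses $\vec{n}^T=\frac{\partial_t+h\nabla_0T}{f\sqrt{1-h\btr{\nabla_0T}^2}}$ into the clean expression $\vec{n}^T=\frac{f_T}{h}\partial_t+f_ThT'\partial_r$.

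Next I would exploit the antisymmetry of $\mathfrak{Rm}$ in its first and last pairs of arguments, so that $\mathfrak{Rm}(V_T,\vec{n}^T,V_T,\vec{n}^T)$ depends only on the bivector $V_T\wedge\vec{n}^T$. Expanding this in the basis $\{\partial_r\wedge\partial_t,\ X\wedge\partial_t,\ X\wedge\partial_r\}$ (the terms $\partial_r\wedge\partial_r$ and $\partial_t\wedge\partial_t$ drop out), the contraction reduces to a quadratic form in the three resulting coefficients. Here the block-diagonal structure in Lemma \ref{lem_intrinsiccurvature1} does the main work: since $\mathfrak{Rm}(\cdot,\partial_t,\cdot,\partial_t)=f\Hess_0f(\cdot,\cdot)$ is diagonal, the mixed term $\mathfrak{Rm}(\partial_r,\partial_t,X,\partial_t)$ vanishes, and the two contractions $\mathfrak{Rm}(\partial_r,\partial_t,X,\partial_r)$ and $\mathfrak{Rm}(X,\partial_t,X,\partial_r)$ vanish as well, as one reads off directly from the listed component $\mathfrak{Rm}(\cdot,\partial_r,\cdot,\partial_t)=-\frac{1}{2}h''\,\d t\,\d r$ after a pair swap. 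Only the three diagonal curvatures $\mathfrak{Rm}(\partial_r,\partial_t,\partial_r,\partial_t)=\frac{1}{2}h''$, $\mathfrak{Rm}(X,\partial_t,X,\partial_t)=\frac{1}{2}hrh'$ and $\mathfrak{Rm}(X,\partial_r,X,\partial_r)=-\frac{rh'}{2h}$ survive.

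Finally I would evaluate the three coefficients and simplify. The coefficient of $\partial_r\wedge\partial_t$ collapses to exactly $c_1$ after applying $1-h^2(T')^2=\frac{h}{h_T}$ once more, while the two fiber coefficients (of $X\wedge\partial_t$ and $X\wedge\partial_r$) combine through the same identity so that all explicit $h_T$-dependence cancels; the outcome is $c_1^2\frac{h''}{2}+\frac{c_2^2h'}{2s}$. This is precisely $\frac{\Hess_0f(V_0,V_0)}{f}$ for $V_0=c_1f\partial_r+\frac{c_2}{s}X$, computed from the diagonal Hessian $\frac{\Hess_0f}{f}=\frac{1}{2}\frac{h''}{h}\d s^2+\frac{1}{2}sh'g_{\mathcal{N}}$. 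I do not expect a conceptual obstacle: the statement is essentially bookkeeping, and the only delicate point is the final simplification, where one must repeatedly use the $T'$--$h_T$ relation so that the $h_T$-dependence of both $V_T$ and $\vec{n}^T$ drops out and the answer reduces to an intrinsic quantity on the time-symmetric slice $M_0$, in agreement with item (iv) of Lemma \ref{lem_constraintsWarpedproductgraphs}.
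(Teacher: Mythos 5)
Your proposal is correct and follows essentially the same route as the paper: a direct contraction of the explicit curvature components from Lemma \ref{lem_intrinsiccurvature1} against $V_T$ and $\vec{n}^T$ expanded in the $(t,r)$-basis, with the relation $1-h\btr{\nabla_0T}^2=h/h_T$ doing the key cancellations. The only (cosmetic) difference is organizational — you package the computation as a quadratic form on the bivector $V_T\wedge\vec{n}^T$, whereas the paper first assembles the tensor $\mathfrak{Rm}(\cdot,\vec{n}^T,\cdot,\vec{n}^T)$ and recognizes the $(t,r)$-block as a multiple of $\d s^2$ before contracting with $V_T$ — and your final expression $c_1^2\tfrac{h''}{2}+\tfrac{c_2^2h'}{2s}$ matches $\frac{\Hess_0f(V_0,V_0)}{f}$ as required.
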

	\begin{proof}
		Recall that $\vec{n}=\frac{\partial_t+h\nabla_0T}{f\sqrt{1-h\btr{\nabla_0T}^2}}$. As $\nabla_0T=hT'\partial_r$, a direct computation using Lemma \ref{lem_intrinsiccurvature1} gives
		\begin{align*}
		&\mathfrak{Rm}(\cdot,\vec{n},\cdot,\vec{n})
		\,=\,&h_Tf\Hess_0f_{rr}\left(\frac{1}{h^2}\d r^2-(\d t\d r+\d r\d t)+h^2(T')^2\d t^2\right)+\frac{\Hess_0f_{IJ}}{f}\d x^I\d x^J,
		\end{align*}
		where we used Lemma \ref{lem_intrinsiccurvature1} and the explicit form \eqref{eq_graphmetric} for $h_T$. Translating $M_T$ in $t$-direction, we can extend $\partial_s$ to a vectorfield on $\mathfrak{M}$. It is straightforward to see that
		\[
		\d s=h_T\left(\frac{1}{h}\d r-hT'\d t\right), 
		\]
		so we have
		\[
		\mathfrak{Rm}(\cdot,\vec{n},\cdot,\vec{n})=\frac{f}{h_T}\Hess_0f_{rr}\d s^2+\frac{\Hess_0f_{IJ}}{f}\d x^I\d x^J.
		\]
		Thus, for $V=c_1f_T\partial_s+\frac{c_2}{s}X$
		\begin{align*}
		\mathfrak{Rm}(V,\vec{n},V,\vec{n})
		&=c_1^2f\Hess_0f_{rr}+\frac{c_2^2}{r^2}\frac{\Hess_0f(X,X)}{f}\\
		&=\frac{\Hess_0f(V_0,V_0)}{f},
		\end{align*}
		where we used the definition of $V_0$.
	\end{proof}
\end{appendix}
\bibliographystyle{plain}	
\bibliography{bib_stcmchyperboloids}

\begin{thebibliography}{10}

\bibitem{aleks}
A.~D. Aleksandrov.
\newblock Uniqueness theorems for surfaces in the large. i.
\newblock {\em Vestnik Leningrad. Univ.}, 11:5--19, 1956.

\bibitem{bartik}
Robert Bartnik.
\newblock {Existence of maximal surfaces in asymptotically flat spacetimes}.
\newblock {\em Comm. Math. Phys.}, 94:155--175, 1984.

\bibitem{bartniksimon}
Robert Bartnik and Leon Simon.
\newblock {Spacelike hypersurfaces with prescribed boundary values and mean
  curvature}.
\newblock {\em Comm. Math. Phys.}, 87(1):131--152, 1982--1983.

\bibitem{birmi}
Danny Birmingham.
\newblock {Topological black holes in anti-de Sitter space}.
\newblock {\em Class. Quantum Grav.}, 16(4):1197, 1999.

\bibitem{borghinifogagnolopinamonti}
Stefano Borghini, Mattia Fogagnolo, and Andrea Pinamonti.
\newblock {The Equality case in the substatic Heintze--Karcher inequality}.
\newblock {\em arXiv:2307.04253}, 2023.

\bibitem{brendle}
Simon Brendle.
\newblock Constant mean curvature surfaces in warped product manifolds.
\newblock {\em Publ. math. IHES}, 117:247--269, 2013.

\bibitem{brillhayward}
Dieter~R. Brill and Sean~A. Hayward.
\newblock Global structure of a black hole cosmos and its extremes.
\newblock {\em Class. Quantum Grav.}, 11:359--370, 1993.

\bibitem{cabrerawolff}
Armando Cabrera~Pacheco and Markus Wolff.
\newblock A note on non time-symmetric initial data sets.
\newblock {\em arXiv:2310.13547}, 2023.

\bibitem{carmars}
Alberto Carrasco and Marc Mars.
\newblock A counterexample to a recent version of the penrose conjecture.
\newblock {\em Class. Quantum Grav.}, 27, 2010.

\bibitem{cedgal}
Carla Cederbaum and Gregory~J. Galloway.
\newblock Photon surfaces with equipotential time slices.
\newblock {\em J. Math. Phys.}, 62, 2021.

\bibitem{cedoliviasophia}
Carla Cederbaum, Sophia Jahns, and Olivia Vi\v{c}\'{a}nek~Mart\'{i}nez.
\newblock On equipotential photon surfaces in electrostatic spacetimes of
  arbitrary dimension.
\newblock {\em arXiv:2311.17509}, 2023.

\bibitem{cednerz}
Carla Cederbaum and Christopher Nerz.
\newblock Explicit riemannian manifolds with unexpectedly behaving center of
  mass.
\newblock {\em Ann. Herni Poincar\'{e}}, 16:1609--1631, 2015.

\bibitem{cederbaumsakovich}
Carla Cederbaum and Anna Sakovich.
\newblock {On the center of mass and foliations by constant spacetime mean
  curvature surfaces for isolated systems in General Relativity}.
\newblock {\em Calculus of Variations and Partial Differential Equations},
  60(214), 2021.

\bibitem{cedwolff}
Carla Cederbaum and Markus Wolff.
\newblock Some new perspectives on the kruskal--szekeres extension.
\newblock {\em arXiv:2310.06946}, 2023.

\bibitem{chrugalpot}
Piotr~T. Chru\'{s}ciel, Gregory~J. Galloway, and Yohan Potaux.
\newblock {Uniqueness and energy bounds for static AdS metrics}.
\newblock {\em Phys. Rev. D}, 101:064034, 2020.

\bibitem{chruscieltod}
Piotr~T. Chru\'{s}ciel and Paul Tod.
\newblock {An angular momentum bound at null infinity}.
\newblock {\em Adv. Theor. Math. Phys.}, 13:1317--1334, 2009.

\bibitem{claudelvirbhadraellis}
Clarissa-Marie Claudel, Kumar Shwetketu~Virbhadra, and George F.~R. Ellis.
\newblock The geometry of photon surfaces.
\newblock {\em J. Math. Phys.}, 42:818--839, 2001.

\bibitem{corvino}
Justin Corvino.
\newblock Scalar curvature deformation and a glueing construction for the
  einstein constraint equations.
\newblock {\em Comm. Math. Phys.}, 214:137--189, 2000.

\bibitem{gerhardt}
Claus Gerhardt.
\newblock {H-surfaces in Lorentzian manifolds}.
\newblock {\em Comm. Math. Phys.}, 89:523--553, 1983.

\bibitem{hawkelli}
S.~W. Hawking and G.~F.~R. Ellis.
\newblock {\em The large scale structure of space-time}.
\newblock Cambridge University Press, London-New York, 1973.

\bibitem{kottler}
Friedrich Kottler.
\newblock {\"{U}ber die physikalischen Grundlagen der Einsteinschen
  Gravitationstheorie}.
\newblock {\em Annalen der Physik}, 56:401--462, 1918.

\bibitem{LeeLee}
Kuo-Wei Lee and Yng-Ing Lee.
\newblock {Spacelike Spherically Symmetric CMC Foliation in the Extended
  Schwarzschild Spactime}.
\newblock {\em Annales Henri Poincar\'{e}}, 17:1477--1503, 2016.

\bibitem{perlick}
Volker Perlick.
\newblock On totally umbilic submanifolds of semi-riemannian manifolds.
\newblock {\em Nonlinear Analysis}, 63:e511--e518, 2005.

\bibitem{schindagui}
J.~C. Schindler and A.~Aguirre.
\newblock Algorithms for the explicit computation of penrose diagrams.
\newblock {\em Class. Quantum Grav.}, 35, 2018.

\bibitem{wald}
Robert~M. Wald.
\newblock {\em {General Relativity}}.
\newblock The University of Chicago Press, Chicago 60637, 1984.

\bibitem{wangwangzang}
M.~Wang, Y.~Wang, and Y.~Zhang.
\newblock Minkowski formulae and alexandrov theorems in spacetime.
\newblock {\em J. Diff. Geom.}, 105:249--290, 2017.

\bibitem{wang}
Y.~Wang.
\newblock A spacetime alexandrov theorem.
\newblock {\em (Doctoral thesis)}, 2014.

\end{thebibliography}

\nopagebreak
\end{document}